
\documentclass[11pt]{article}

%
\usepackage{amsmath}
\usepackage{amsfonts}
\usepackage{amssymb}
\usepackage{amsbsy}
\usepackage{amsthm}
\usepackage{fullpage}

 \numberwithin{equation}{section}

 \newtheorem{theorem}{Theorem}[section]

  \newtheorem{lemma}[theorem]{Lemma}

\newtheorem{definition}[theorem]{Definition}

\newcommand{\EE}{\mathbb{E}}
\newcommand{\VV}{\mathbb{V}}

\newcommand{\eps}{\varepsilon}

\newcommand{\dx}{\, {\rm d} x}

\newcommand{\R}{\, {\mathbb R}}

\begin{document}

%
%

\title{Multilevel Monte Carlo methods for highly heterogeneous media}

\author{Aretha L. Teckentrup \\ [12pt]
University of Bath\\
        Claverton Down\\
Bath, BA2 7AY, UK\\
}
\date{}
\maketitle

\begin{abstract}
We discuss the application of multilevel Monte Carlo methods to elliptic partial differential equations with random coefficients. Such problems arise, for
example, in uncertainty quantification in subsurface flow modeling. We give a brief review of recent advances in the numerical analysis of the multilevel algorithm under minimal assumptions on the random coefficient, and extend the analysis to cover also tensor--valued coefficients, as well as point evaluations. Our analysis includes as an example log--normal random coefficients, which are frequently used in applications.
\end{abstract}

\section{Introduction}
\label{sec:intro}
There are many situations in which modeling and computer simulation are indispensable
tools and where the mathematical models employed have been demonstrated to give adequate
representations of reality.  However, the parameters appearing in the models often have to be
estimated from measurements and are, therefore, subject to uncertainty. This uncertainty
propagates through the simulations and quantifying its impact on the results is frequently
of great importance.

A good example is provided by the problem of assessing
the safety of a potential deep geological repository for radioactive wastes. Any radionuclides
leaking from such a repository could be transported back to the human environment by
groundwater flowing through the rocks beneath the earth's surface. The very
long timescales involved mean that modeling and simulation are essential in
evaluating repository performance. The study of groundwater flow is well established,
and there is general scientific consensus that in many situations Darcy's Law can be expected to lead to an
accurate description of the flow \cite{demarsily}. The main parameter appearing in Darcy's
Law is the permeability, which characterizes how easily water can flow through the rock
under a given pressure gradient. In practice it is only possible to measure the
permeability at a limited number of spatial locations, but it is required at all points of the computational
domain for the simulation.  This fact is the primary source of
uncertainty in groundwater flow calculations. Understanding and quantifying the impact of this uncertainty
on predictions of radionuclide transport is essential for reliable repository safety assessments.

A widely used approach for dealing with uncertainty in groundwater flow is to represent the permeability
as a random field \cite{delhomme,demarsilyetal}. A model frequently used is a log--normal random field, with a covariance function that is only Lipschitz continuous. Individual realizations of such fields have low spatial regularity and significant spatial variation,
making the problem of solving for the pressure very costly. The notoriously slow rate of convergence of the standard Monte Carlo algorithm means that many such realizations are required to obtain accurate results, rendering the problem computationally unfeasible. 

In this paper, we therefore employ the multilevel Monte Carlo (MLMC) method. This method was first introduced by \cite{giles1} in the context of stochastic differential equations in finance, and similar ideas were also used by \cite{heinrich} and \cite{brandt2}. In the context of our groundwater flow model problem, it was shown in for example \cite{cgst11} and \cite{tsgu12}, that the multilevel method leads to a significant reduction in the computational cost required to achieve a given accuracy.

The main challenge in the numerical analysis of MLMC methods for elliptic partial differential equations (PDEs) with random coefficients, is the quantification of the numerical discretization error, or in other words the {\em bias} of the estimator. Models for the random coefficient frequently used in applications, such as log--normal random fields, are not uniformly coercive, making the numerical analysis challenging. A rigorous analysis of the MLMC algorithm under minimal assumptions on the random coefficient was recently carried out by \cite{cst11} and \cite{tsgu12}. In particular, uniform coercivity or boundedness were not assumed in these papers. If one does assume uniform coercivity and boundedness of the coefficient, the analysis of the discretization error is classical, and an analysis of the MLMC method for this case can be found in \cite{bsz11}. Other related works on numerical errors for elliptic PDEs with random coefficients are \cite{charrier} and \cite{gittelson}.

The aim of this paper is to extend the theory in \cite{tsgu12}. We here consider the case of more general, tensor--valued models of the permeability, which are often used in applications to model orthotropic media. We will also prove convergence of the MLMC algorithm for point evaluations of the pressure or the Darcy flux.

The outline of the paper is as follows. In \S \ref{sec:multi}, we describe the multilevel Monte Carlo algorithm applied to elliptic PDEs with random coefficients, and discuss its performance. In \S \ref{sec:theory}, we then prove an upper bound on the computational cost of the multilevel Monte Carlo estimator. We recall some of the main results from (\cite{cst11,tsgu12}), before extending the results to tensor--coefficients and point evaluations.

\section{Multilevel Monte Carlo Simulation}
\label{sec:multi}
The classical equations governing a steady state, single phase flow, are Darcy's law coupled with an incompressibility condition. These equations can be written in second order form as 
\begin{equation}
\label{mod}
-\mathrm{div} \left(\mathbf A \nabla u\right) = 
f, \ \qquad \text{in} \quad D \subset \mathbb R^d, 
\end{equation}
subject to appropriate boundary conditions. Here, $\mathbf A$ is the permeability tensor, $u$ is the resulting pressure field, and $f$ are the source terms. Modeling $\mathbf A$ as a random field, $u$ also becomes a random field.

In applications, one is then usually interested in finding the expected value of some functional $Q=M(u)$ of the solution $u$ to our model problem~\eqref{mod}. This could for example be the value of the pressure $u$ or the Darcy flux $- \mathbf A \nabla u$ at or around a given point in the computational domain, or outflow over parts of the boundary. Since $u$ is not easily accessible, $Q$ is often approximated by the quantity $Q_h:=M(u_h)$, where $u_h$ is a finite dimensional approximation to $u$, such as the finite element solution on a sufficiently fine spatial grid $\mathcal{T}_h$. 

To estimate $\EE\left[Q\right]$,
we then compute approximations (or {\it estimators}) $\widehat{Q}_h$ to
$\EE\left[Q_h\right]$, and quantify the accuracy of our approximations via the root
mean square error (RMSE)
\[
e(\widehat{Q}_h) := \left(\mathbb{E}\big[(\widehat{Q}_h - \mathbb{E}(Q))^2\big]\right)^{1/2}.
\]
The computational cost $\mathcal{C}_\eps(\widehat{Q}_h)$ of our estimator is then
quantified by the number of floating point operations that are needed to achieve a
RMSE of $e(\widehat{Q}_h) \leq \eps$. This will be referred to as the $\eps$--cost.

The classical Monte Carlo (MC) estimator for $\EE\left[Q_h\right]$ is
\begin{equation*}
\label{MC}
\widehat{Q}^\mathrm{MC}_{h,N} := \frac{1}{N} \sum_{i=1}^N Q_h(\omega^{(i)}),
\end{equation*}
where $Q_h(\omega^{(i)})$
is the $i$th sample of $Q_h$ and $N$ independent samples are computed in total.

There are two sources of error in the estimator~\eqref{MC}, the
approximation of $Q$ by $Q_h$, which is related to the spatial
discretisation, and the sampling error due to replacing the expected value
by a finite sample average. This becomes clear when expanding the mean
square error (MSE) and using the fact that for Monte Carlo
$\EE[\widehat{Q}^\mathrm{MC}_{h,N}] = \EE[Q_h]$ and
$\VV[\widehat{Q}^\mathrm{MC}_{h,N}] = N^{-1} \, \VV[Q_h],$ where $\VV[X] :=
\EE[(X-\EE[X])^2]$ denotes the variance of the random variable $X:\Omega \to \mathbb{R}$.
We get
\begin{equation}
\label{msesd2}
e(\widehat{Q}^\mathrm{MC}_{h,N})^2 \ = \ N^{-1} \VV[Q_h] + \big(\EE[Q_h - Q]\big)^2.
\end{equation}
A sufficient condition to achieve a RMSE of $\eps$ with this estimator is that both of
these terms are less than $\eps^2/2$. For the first term, this is achieved by choosing
a large enough number of samples, $N=\mathcal{O}(\eps^{-2})$. For the second term, we
need to choose a fine enough finite element mesh $\mathcal{T}_h$, such that
$\EE[Q_h - Q] = \mathcal{O}(\eps)$.

The main idea of the MLMC estimator is very simple. We sample not just from one
approximation $Q_h$ of $Q$, but from several. Linearity of the expectation operator
implies that
\begin{equation*}
\EE[Q_h] = \EE[Q_{h_0}] + \sum_{\ell=1}^L \EE[Q_{h_\ell} - Q_{h_{\ell-1}}]
\label{eq:identity}
\end{equation*}
where $\{h_\ell\}_{\ell = 0, \dots, L}$ are the mesh widths of a sequence of increasingly
fine triangulations $\mathcal{T}_{h_\ell}$ with
$\mathcal{T}_h := \mathcal{T}_{h_L}$, the finest mesh. 
Hence, the expectation on the finest mesh is equal to
the expectation on the coarsest mesh, plus a sum of corrections adding
the difference in expectation between simulations on consecutive meshes.
The multilevel idea is now to independently estimate each of these
terms such that the overall variance is minimized for
a fixed computational cost.

Setting for convenience $Y_0 := Q_{h_0}$ and $Y_\ell := Q_{h_\ell} - Q_{h_{\ell-1}}$, for
$1 \leq \ell \leq L$, we define the MLMC estimator simply as
\begin{equation*}
\widehat{Q}^\mathrm{ML}_{h,\{N_\ell\}} \ := \ \sum_{\ell=0}^L \widehat{Y}^\mathrm{MC}_{\ell, N_\ell}, 
\end{equation*}
where $\widehat{Y}^\mathrm{MC}_{\ell, N_\ell}$ is the standard MC estimator for $Y_\ell$,
\begin{equation*}
\widehat{Y}^\mathrm{MC}_{\ell, N_\ell} 
\ = \  \frac{1}{N_{\ell}} \sum_{i=1}^{N_{\ell}} Y_\ell(\omega^{(i)}).
\end{equation*}
Here, it is important to note that $Y_\ell(\omega^{(i)}) = Q_{h_\ell}(\omega^{(i)}) -
Q_{h_{\ell-1}}(\omega^{(i)})$, i.e. the quantity $Y_\ell(\omega^{(i)})$ is computed using the same sample on both meshes.

Since all the expectations $\EE[Y_{\ell}]$ are estimated independently in
\eqref{eq:identity}, the variance of the MLMC estimator is
$
\sum_{\ell=0}^L N_{\ell}^{-1}\,\VV[Y_\ell]
$
 and expanding as in~(\ref{msesd2}) leads again to a MSE of the form 
\begin{equation*}\label{mseml}
e(\widehat{Q}^\mathrm{ML}_{h,\{N_\ell\}})^2 \; := \;
\mathbb{E}\Big[\big(\widehat{Q}^\mathrm{ML}_{h,\{N_\ell\}} - \mathbb{E}[Q]\big)^2\Big] \; = \;
\sum_{\ell=0}^L N_{\ell}^{-1}\,\VV[Y_{\ell}] \;+ \; \big(\mathbb{E}[Q_{h} - Q]\big)^2.
\end{equation*}
As in the classical MC case before, we see that the MSE consists of two terms,
the variance of the estimator and the error in mean between $Q$ and $Q_h$. Note that
the second term is identical to the second term for the classical MC method in
\eqref{msesd2}. 
A sufficient condition to achieve a RMSE of $\eps$ is again to make
both terms less than $\eps^2/2$. This is easier to achieve with the MLMC estimator,
as
\begin{itemize}
\item for sufficiently large $h_0$, samples of $Q_{h_0}$ are much cheaper to obtain
than samples of $Q_{h}$;
\item the variance $Y_{\ell}$ tends to 0 as $h_\ell \to 0$, meaning we
need fewer samples on $\mathcal{T}_{h_\ell}$, for $\ell > 0$.
\end{itemize}

Let now $\mathcal{C}_\ell$ denote the cost to obtain one sample of $Q_{h_\ell}$. Then
we have the following results on the  $\eps$--cost of the MLMC estimator
(cf.~\cite{cgst11,giles1}).

\begin{theorem}
\label{main_thm}
Suppose 
there are positive constants $\alpha, \beta, \gamma, c_{\scriptscriptstyle \mathrm{M1}}, c_{\scriptscriptstyle \mathrm{M2}}, c_{\scriptscriptstyle \mathrm{M3}} > 0$ such that
$\alpha\!\geq\!\frac{1}{2}\,\min(\beta,\gamma)$ and
\begin{itemize}
\item[{\bf M1.}]
$\displaystyle
\left| \EE[Q_{h} - Q] \right| \  \leq c_{\scriptscriptstyle \mathrm{M1}} \ h^{\alpha},
$
\item[{\bf M2.}]
$\displaystyle
\VV[Q_{h_\ell}-Q_{h_{\ell-1}}] \ \leq c_{\scriptscriptstyle \mathrm{M2}} \ h_\ell^{\beta},
$
\item[{\bf M3.}]
$\displaystyle
\mathcal{C}_{\ell} \ \leq c_{\scriptscriptstyle \mathrm{M3}} \ h_{\ell}^{-\gamma},
$
\end{itemize}
Then, for any $\;\eps < e^{-1}$, there exist an $L$ and a sequence
$\{N_{\ell}\}_{\ell=0}^L$, such that $e(\widehat{Q}^\mathrm{ML}_{h, \{N_\ell\}}) < \eps$ and
\[
\mathcal{C}_\eps(\widehat{Q}^\mathrm{ML}_{h, \{N_\ell\}}) \ \lesssim \ \left\{\begin{array}{ll}
\ \eps^{-2}              ,    & \text{if } \ \beta>\gamma, \\[0.02in]
\ \eps^{-2} (\log \eps)^2,    & \text{if } \ \beta=\gamma, \\[0.04in]
\ \eps^{-2-(\gamma - \beta)/\alpha}, & \text{if } \ \beta<\gamma,
\end{array}\right.
\]
where the hidden constant depends on $c_{\scriptscriptstyle \mathrm{M1}}, c_{\scriptscriptstyle \mathrm{M2}}$ and $c_{\scriptscriptstyle \mathrm{M3}}$. 
For the classical MC estimator we have \\ $\mathcal{C}_\eps(\widehat{Q}^\mathrm{MC}_h) \, \lesssim \, \eps^{-2-\gamma/\alpha}$, where the hidden constant depends on $c_{\scriptscriptstyle \mathrm{M1}}$ and $c_{\scriptscriptstyle \mathrm{M3}}$.
\end{theorem}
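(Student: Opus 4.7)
The plan is to bound the mean-square error by decomposing it as squared bias plus estimator variance, and then to choose $L$ and $\{N_\ell\}$ so that each contribution is at most $\eps^2/2$. Assumption M1 gives $(\EE[Q_{h_L}-Q])^2 \le c_{\mathrm{M1}}^2 h_L^{2\alpha}$, so it suffices to pick $L$ as the smallest index with $h_L \le (\eps/(c_{\mathrm{M1}}\sqrt{2}))^{1/\alpha}$. Assuming, as is standard, geometric mesh refinement $h_\ell = h_0 s^{-\ell}$ for some $s>1$, this yields $L \sim \alpha^{-1}|\log\eps|$ and $h_L^{-1} \lesssim \eps^{-1/\alpha}$ (the hypothesis $\eps<e^{-1}$ ensuring $L\ge 1$).

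Next, with $L$ fixed, I would treat the choice of $\{N_\ell\}$ as minimization of $\sum_\ell N_\ell \mathcal{C}_\ell$ subject to $\sum_\ell N_\ell^{-1} V_\ell \le \eps^2/2$, where $V_\ell := \VV[Y_\ell]$. A Lagrange multiplier calculation gives the optimum $N_\ell \propto \sqrt{V_\ell/\mathcal{C}_\ell}$, and taking ceilings $N_\ell = \lceil 2\eps^{-2}\sqrt{V_\ell/\mathcal{C}_\ell}\,S\rceil$ with $S := \sum_{k=0}^L \sqrt{V_k \mathcal{C}_k}$, the total cost splits as
\[
\mathcal{C}_\eps(\widehat{Q}^\mathrm{ML}_{h,\{N_\ell\}}) \;\le\; 2\eps^{-2} S^{2} \;+\; \sum_{\ell=0}^L \mathcal{C}_\ell,
\]
the second term accounting for the integer ceilings. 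Inserting M2 and M3 gives $\sqrt{V_\ell \mathcal{C}_\ell} \lesssim h_\ell^{(\beta-\gamma)/2}$, so I would evaluate the geometric sum $S$ in the three regimes: if $\beta>\gamma$ the ratio is less than one and $S = \mathcal{O}(1)$; if $\beta=\gamma$ the terms are constant and $S \sim L \sim |\log\eps|$; if $\beta<\gamma$ the finest level dominates and $S \lesssim h_L^{(\beta-\gamma)/2} \lesssim \eps^{-(\gamma-\beta)/(2\alpha)}$. Squaring and multiplying by $\eps^{-2}$ reproduces the three claimed bounds.

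The main obstacle, and the only place where the constraint $\alpha \ge \tfrac12\min(\beta,\gamma)$ is used, is to verify that the ceiling contribution $\sum_{\ell=0}^L \mathcal{C}_\ell$ does not dominate. Since $\mathcal{C}_\ell \le c_{\mathrm{M3}} h_\ell^{-\gamma}$ and the mesh is geometric, this sum is of order $h_L^{-\gamma} \lesssim \eps^{-\gamma/\alpha}$. Comparing with the three bounds above reduces to the inequalities $\gamma \le 2\alpha$ when $\beta\ge\gamma$ and $\beta \le 2\alpha$ when $\beta<\gamma$, both of which are exactly the hypothesis; elsewhere the argument is direct substitution and geometric sums.

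Finally, the classical MC bound follows as a simpler special case: choose $h$ with $h^{-1} \lesssim \eps^{-1/\alpha}$ so that the bias is at most $\eps/\sqrt{2}$, and take $N = \lceil 2 c \eps^{-2}\rceil$ with $c$ an upper bound on $\VV[Q_h]$ (finite for small enough $h$, which can be seen from M1 together with a bound on $\VV[Q]$). Then $N \mathcal{C}_h \lesssim \eps^{-2} h^{-\gamma} \lesssim \eps^{-2-\gamma/\alpha}$, as asserted.
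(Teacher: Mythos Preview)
Your proposal is correct and follows the standard Giles argument. The paper does not actually prove Theorem~\ref{main_thm}; it quotes the result from \cite{cgst11,giles1} and only sketches the underlying mechanism afterwards --- the Lagrange-multiplier choice $N_\ell \eqsim \sqrt{\VV[Y_\ell]/\mathcal{C}_\ell}$ and the observation that the total cost then behaves like $\sum_\ell \sqrt{\VV[Y_\ell]\,\mathcal{C}_\ell}$, splitting into the three regimes according to the sign of $\beta-\gamma$. Your argument reproduces exactly this, and in fact supplies more detail than the paper does: you make explicit the geometric refinement $h_\ell = h_0 s^{-\ell}$, the ceiling correction $\sum_\ell \mathcal{C}_\ell$, and the role of the hypothesis $\alpha \ge \tfrac12\min(\beta,\gamma)$ in controlling that correction, none of which the paper spells out.
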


The convergence rates $\alpha$ and $\beta$ in Theorem\ref{main_thm} are related to the convergence of the spatial discretization error, and have been proven for various quantities of interest in \cite{cst11} and \cite{tsgu12}. In \S \ref{sec:linfty}, we further extend this theory to point evaluations of the pressure and the flux. Typical values of $\alpha$ and $\beta$ for the model problem considered in this paper are $\alpha=1$ and $\beta=2$ for rough models of the permeability and $\alpha=2$ and $\beta=4$ for smoother models (this is made more precise in \S \ref{sec:theory}). The rate $\gamma$ is related to the cost of numerically solving the PDE for one realization of the random coefficient. This involves producing a sample of the random coefficient, and solving a linear system of equations. The cost of solving the linear system will generally be dominant, and with an optimal linear solver, the cost of one such solve is proportional to $h_\ell^{-d}$, the number of unknowns, and so $\gamma \approx d$.

In Table~\ref{tab:theory}, we show the $\eps$--costs as predicted by Theorem~\ref{main_thm}, for typical values of $\alpha$ and $\beta$. We assume an almost optimal linear solver, and take $\gamma$ to be slightly larger than $d$. We see that the gains we can expect from using the MLMC estimators are always significant, usually in the order of two orders of magnitude. It is also worth noting that although the actual $\eps$--costs are higher in the case of the rough model problem with $\alpha=1$ and $\beta=2$, the gains we can expect from MLMC are also greater in this case. 

\begin{table}[h]
\begin{center}
\caption{\label{tab:theory} Upper bounds for the $\eps$-costs of classical 
and multilevel Monte Carlo from Theorem~\ref{main_thm} in the cases $\alpha=1, \beta=2$ (left) and $\alpha=2, \beta=4$ (right), with $\gamma=d+\delta$ in both cases, where $\delta >0$ is a small constant. $d$ is the spatial dimension from~\eqref{mod}.}
\begin{tabular}{|c|cc|cc|}
\hline
& \multicolumn{2}{c|}{$\alpha=1$, $\beta=2$} & \multicolumn{2}{c|}{$\alpha=2$, $\beta=4$} \\
$d$ & ~~MC~~ & MLMC & ~~MC~~ & MLMC \\\hline
$1$ & $\eps^{-3}$ & $\eps^{-2}$ & $\eps^{-5/2}$ & $\eps^{-2}$ \\
$2$ & $\eps^{-4}$ & $\eps^{-2}$ & $\eps^{-3}$ & $\eps^{-2}$ \\
$3$ & $\eps^{-5}$ & $\eps^{-3}$ & $\eps^{-7/2}$ & $\eps^{-2}$ \\\hline
\end{tabular} 
\end{center}
\end{table}

The reduction in cost associated with the MLMC estimator over standard MC is largely due to the fact that the number of samples needed on the finer grids is greatly reduced. Most of the uncertainty can already be captured on the coarse grids, and so the MLMC estimator shifts some of the computational effort on to the coarse grids. Exactly how much of the computational effort can (and should) be shifted towards the coarse grids, depends on the model problem and the quantity of interest $Q$. The MLMC algorithm described above chooses the number of samples on each level in such a way that the computational cost of the estimator is minimized, subject to the overall variance of the estimator being less than $\eps^2/2$. This can lead to three different scenarios: the computational cost could be predominantly on the coarse levels, spread evenly across the levels, or predominantly on the fine levels. This corresponds to the three upper bounds given in Theorem~\ref{main_thm} above.

To make this more precise, note that for given $\{N_\ell\}$ and $\{\mathcal C_\ell\}$, the computational cost of the MLMC estimator is 
\[
\mathcal{C}(\widehat{Q}^\mathrm{ML}_{h, \{N_\ell\}}) = \sum_{\ell=0}^L N_\ell \, \mathcal C_\ell.
\]
Treating the $N_{\ell}$ as continuous variables, the cost of the MLMC estimator is minimized
for a fixed variance by choosing
\begin{equation*}
\label{choice_Nl}
N_{\ell} \ \eqsim \ \sqrt{\VV[Y_{\ell}] / \mathcal{C}_{\ell}}\, ,
\end{equation*}
with the constant of proportionality chosen so that the overall variance is
$\eps^2/2$. The total cost on level $\ell$ is then proportional to
$\sqrt{\VV[Y_{\ell}] \, \mathcal{C}_{\ell}}$, and hence
\[
\mathcal{C}(\widehat{Q}^\mathrm{ML}_{h, \{N_\ell\}}) \ \lesssim \
\sum_{\ell=0}^L \sqrt{\VV[Y_{\ell}] \, \mathcal{C}_{\ell}}.
\]
If the variance $\VV[Y_{\ell}]$ decays faster with $\ell$ than the cost
$\mathcal{C}_{\ell}$ increases, i.e. if $\beta > \gamma$, the dominant term will be on level~$0$. Similarly, if $\VV[Y_{\ell}]$ decays slower than  $\mathcal{C}_{\ell}$ increases, the dominant term will be on the finest level $L$, and if $\VV[Y_{\ell}]$ decreases at the same rate as $\mathcal{C}_{\ell}$ increases, the cost is spread evenly across all levels. In the context of our model problem in subsurface flow, we are usually in the last regime, where $\beta < \gamma$. Especially in 2 or 3 space dimensions, the cost of obtaining one sample grows very rapidly, and the dominant cost will always be on the finest level. It is worth to note that if $\beta=2\alpha$, and like here, $\beta < \gamma$, the cost of the MLMC estimator is of the order $\eps^{-\gamma/\alpha}$. This is in fact the same cost as taking only one sample on the finest grid, since we have to choose $h \eqsim \eps^{\alpha}$ to get a MSE of $\mathcal O(\eps^2)$, and the cost of one solve is then $\mathcal C \lesssim h^{-\gamma} = \eps^{-\gamma/\alpha}$, by assumption M3. This means that asymptotically our multilevel Monte Carlo method for the stochastic problem has the same complexity as a deterministic solver for one realization of the same problem.

Another issue which influences the cost of the MLMC estimator, is the choice of the coarsest mesh size $h_0$. The bigger $h_0$ is, the more levels we can include in the MLMC estimator, and the bigger the potential gains are with respect to standard MC. Although the choice of $h_0$ does not influence the asymptotic bounds on the cost given in Theorem~\ref{main_thm}, the choice of $h_0$ does have an effect on the absolute cost of the MLMC estimator for any fixed accuracy $\eps$. In practical applications, $h_0$ must often be chosen to give a minimal level of resolution to the problem in order to get the MLMC estimator with the smallest absolute cost. For the model problem in subsurface flow, where the permeability varies on a very fine scale and is highly oscillatory, very coarse meshes do not yield a good representation of the problem, and including them in the MLMC estimator can lead to a larger absolute cost than necessary. 
One way to circumvent this problem, is to use smoother representations of the permeability on the coarse levels. It was shown in \cite{tsgu12} that, without introducing any additional bias in the MLMC estimator, this strategy allows for the inclusion of much coarser levels, and hence gives a significantly lower absolute cost of the MLMC estimator, even in the context of short correlation lengths.

The rest of the paper is devoted to proving theoretical convergence rates, and thus justifying assumptions M1 and M2 in Theorem~\ref{main_thm}.

\section{Numerical Analysis}
\label{sec:theory}
For simplicity, we consider a particular instance of model problem \eqref{mod}, posed on a Lipschitz--polygonal domain $D \subset \mathbb R^2$ and with homogeneous Dirichlet conditions: Given a probability space $\left( \Omega, \mathcal{A}, \mathbb{P}\right)$ and $\omega \in \Omega$, find $u:\Omega\times D \to \mathbb{R}$ such that
\begin{align}
\label{mod1}
-\mathrm{div} \left(\mathbf A(\omega, x) \nabla u(\omega, x)\right) &= 
f(\omega,x), \ \qquad \mathrm{for} \ x \in D, \\
u(\omega,x) &= 0, \qquad \mathrm{for} \ x \in \Gamma_j\,.  \nonumber
\end{align}
The differential operators $\mathrm{div}$ and $\nabla$ are with respect to $x \in D$, and $\Gamma:= \cup_{j=1}^m \overline \Gamma_j$ denotes the boundary of $D$, partitioned into straight line segments. 
Note that due to the tensor--valued coefficient $\mathbf A(\omega, x)$, this problem is more general than those studied in our earlier papers \cite{cst11} and \cite{tsgu12}. It is of course possible to consider other boundary conditions and/or higher spatial dimensions, and this is done for example in \cite{tsgu12}. One can also include lower order terms in the differential operator.

We will carry out a finite element error analysis of \eqref{mod1}, under minimal assumptions on the coefficient tensor $\mathbf A$ and on the source term $f$. In particular, we do not assume that $\mathbf A$ is coercive and bounded {\em uniformly} in $\omega$, since this is not the case for example for log--normal random fields, which can take values arbitrarily close to zero or infinity for any given realization. The crucial observation is that for each {\em fixed} $\omega$, we have a uniformly coercive and bounded problem (in $x$). The first step in our error analysis is therefore to derive an estimate on the finite element error for a fixed $\omega$. However, in order to be able to compute moments (expectations) of the error, it is crucial that we keep track of how all the constants that appear in our estimates depend on $\omega$, or in other words on $\mathbf A$ and $f$.

The coefficient tensor $\mathbf A(\omega,\cdot)$ is assumed to take values in the space of real--valued, symmetric $d \times d$ matrices. Given the usual norm $|v| := \left(\sum_{i=1}^d |v_i^2|\right)^{1/2}$ on $\mathbb R^d$, we choose the norm on $\mathbb R^{d \times d}$ as the norm induced by $|\cdot|$, or any matrix norm equivalent to it. 

For all $\omega \in \Omega$, let now $\mathbf A_{\mathrm{min}}(\omega)$ be such that 
\begin{equation*}\label{amin}
\mathbf A(\omega,x) \xi \cdot \xi \gtrsim \mathbf A_{\mathrm{min}}(\omega) |\xi|^2, \quad \forall \xi \in \mathbb R^d, \quad \text{uniformly in } x \in D,
\end{equation*}
and define
\begin{equation*}\label{amax}
\mathbf A_{\mathrm{max}}(\omega) := \|\mathbf A(\omega,\cdot)\|_{C(\overline D, \R^{d \times d})}.\end{equation*}
\hspace*{1ex} \\
We make the following assumptions on the input data:
\begin{itemize}
\item[{\bf A1.}] $\mathbf A_{\mathrm{min}} \ge 0$ almost surely and 
$1/\mathbf A_{\mathrm{min}} \in L^{p}(\Omega)$, for all $p \in (0,\infty)$.
\item[{\bf A2.}] $\mathbf A_{i,j} \in C\,^{t}(\overline D), i,j=1,\dots,d$, and $\mathbf A \in L^{p}(\Omega,C\,^{t}(\overline D, \R^{d \times d})))$, for some $0 < t \le 1$ and 
for all $p \in (0,\infty)$.
\item[{\bf A3.}] $f \in L^{p_*}(\Omega,H^{t-1}(D))$
, for some $p_* \in (0,\infty]$.
\end{itemize}

Here, the space $C\,^{t}(\overline D, \R^{d \times d})$ is the space of $d \times d$ matrix--valued, H\"older--continuous functions with exponent $t$, $H^s(D)$ is the usual fractional order Sobolev space, and  $L^q(\Omega, \mathcal B)$ denotes the space of $\mathcal B$-valued random fields, for which the $q^\mathrm{th}$ moment (with respect to the measure $\mathbb P$) of the $\mathcal B$--norm is finite, see e.g \cite{cst11}. A space which will appear in the error analysis later is the space $L^q(\Omega, H^1_0(D))$, which denotes the space of $H_0^1(D)$--valued random fields with the norm on $H_0^1(D)$ being the usual $H^1(D)$--seminorm $|\cdot|_{H^1(D)}$. It is possible to weaken Assumptions A1 and A2 to $1/\mathbf A_{\mathrm{min}}$ and $\|\mathbf A\|_{C\, ^{t}(\overline D, \R^{d \times d})}$ having finite moments of order $p_a$, for some $p_a \in (0,\infty)$, but we will not do this here for ease of presentation. 

An example of a random tensor $\mathbf A(\omega,x)$ that satisfies Assumptions A1 and A2, for all $p \in (0,\infty)$, is a tensor of the form $\mathbf A = \exp(g_1) K_1 + \exp(g_2) K_2$, where $g_1$ and $g_2$ are real--valued Gaussian random fields with a H\"older--continuous mean and a Lipschitz continuous covariance function, and $K_1$ and $K_2$ are deterministic tensors satisfying (deterministic versions of) assumptions A1--A2. For example, $g_i, i=1,2,$ could have constant mean and an exponential covariance function, given by
\begin{equation*}\label{cov:exp}
\mathbb E\Big[(g_i(\omega,x)-\mathbb E[g_i(\omega,x)])(g_i(\omega,y)-\mathbb 
E[g_i(\omega,y)]) \Big]= \sigma^2 \exp(-\|x-y\|/ \lambda)
\end{equation*}
where $\sigma^2$ and $\lambda$ are real parameters known as the {\em variance} and {\em correlation length}, and $\|\cdot \|$ denotes a norm on $\mathbb{R}^d$.
It follows from the results in \cite{cst11} that the resulting random tensor satisfies assumptions A1--A2, for any $t < 1/2$. If we instead choose a smoother covariance function, like the Gaussian covariance
\begin{equation*}\label{cov:gauss}
\mathbb E\Big[(g_i(\omega,x)-\mathbb E[g_i(\omega,x)])(g_i(\omega,y)-\mathbb 
E[g_i(\omega,y)]) \Big]= \sigma^2 \exp(-\|x-y\|^2/ \lambda^2)
\end{equation*}
the resulting random tensor $\mathbf A$ satisfies assumptions A1--A2 with $t=1$.

To simplify the notation in the following, let $0 < C_{\mathbf A,f} < \infty$ denote a generic constant which depends algebraically on $L^q(\Omega)$--norms of $\mathbf A_{\mathrm{max}}, \, 1/\mathbf A_{\mathrm{min}}, \, \|\mathbf A\|_{C\,^{t}(\overline D, \R^{d \times d})}$ and $ \|f\|_{H^{t-1}(D)}$, with $q < p*$ in the case of $\|f\|_{H^{t-1}(D)}$. We will also use the notation $b \lesssim c$ for two positive quantities $b$ and $c$, if $b/c$ is uniformly bounded by a constant independent of $\mathbf A$, $f$ and $h$. 

We will study the PDE~\eqref{mod1} in weak (or 
variational) form, for fixed $\omega \in \Omega$. This is not possible uniformly 
in $\Omega$, but almost surely. In the following we will not explicitly write this 
each time. With $f(\omega,\cdot) \in H^{t-1}(D)$ and 
$0 < \mathbf A_{\mathrm{min}}(\omega) \leq \mathbf A_{\mathrm{max}}(\omega) < \infty$, for 
all $x \in D$, the variational formulation of \eqref{mod1}, parametrized by
$\omega \in \Omega$, is 
\begin{equation}\label{weak}
b_\omega\big(u(\omega,\cdot),v\big) = L_\omega(v)\,, 
\quad \text{for all} \quad v \in H^1_0(D),
\end{equation}
where the bilinear form $b_\omega$ and the linear functional
$L_\omega$ (both parametrized by $\omega \in \Omega$) are defined as usual, for all 
$u,v \in H^1_0(D)$, by
\begin{equation*}\label{bilinear}
b_\omega(u,v) := \int_D \mathbf A(\omega,x)\nabla u(x) \cdot \nabla v(x) \dx \quad \text{and} \quad 
L_\omega(v) := \langle f(\omega,\cdot), v\rangle_{H^{t-1}(D),H^{1-t}_0(D)}\,.
\end{equation*}
We say that for any $\omega \in \Omega$, $u(\omega,\cdot)$ is a 
weak solution of \eqref{mod1} iff $u(\omega,\cdot) \in H^1_0(D)$ and satisfies 
(\ref{weak}).
The following result is classical. It is based on the Lax-Milgram Lemma (cf \cite{hackbusch}).

\begin{lemma}\label{laxm} For almost all $\omega \in \Omega$, the bilinear form 
$b_\omega(u,v)$ is bounded and coercive in $H^1_0(D)$ with respect to $|\cdot|_{H^1(D)}$, 
with constants $\mathbf A_{\mathrm{max}}(\omega)$ and $\mathbf A_{\mathrm{min}}(\omega)$, respectively. 
Moreover, there exists a unique solution $u(\omega,\cdot) \in H^1_0(D)$ to the 
variational 
problem \eqref{weak} and
\[
|u(\omega,\cdot)|_{H^1(D)} \lesssim \frac{\|f(\omega,\cdot)\|_{H^{-1}(D)}}{\mathbf A_{\mathrm{min}}(\omega)}.
\]
\end{lemma}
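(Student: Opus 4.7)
The plan is to verify the hypotheses of the Lax--Milgram lemma pointwise in $\omega$ on the full--measure event on which $0 < \mathbf{A}_{\mathrm{min}}(\omega) \leq \mathbf{A}_{\mathrm{max}}(\omega) < \infty$. Assumption A1 forces $1/\mathbf{A}_{\mathrm{min}} < \infty$ almost surely, and assumption A2 forces $\mathbf{A}_{\mathrm{max}} < \infty$ almost surely, so this event has probability one, and I will work on it throughout. Everything that follows should be read as holding for almost every $\omega$, which is harmless since later moments in $\omega$ will not see null sets.

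For boundedness I would pull the matrix norm of $\mathbf{A}(\omega,x)$ out of the integrand pointwise in $x$: since the chosen matrix norm on $\mathbb{R}^{d\times d}$ is induced by the Euclidean norm, one has $|\mathbf{A}(\omega,x)\nabla u(x)\cdot\nabla v(x)| \leq \mathbf{A}_{\mathrm{max}}(\omega)|\nabla u(x)||\nabla v(x)|$, and an application of Cauchy--Schwarz in $L^2(D)$ yields $|b_\omega(u,v)|\leq \mathbf{A}_{\mathrm{max}}(\omega)|u|_{H^1(D)}|v|_{H^1(D)}$. Coercivity is then immediate from the defining inequality of $\mathbf{A}_{\mathrm{min}}(\omega)$, which gives $b_\omega(u,u)\geq \mathbf{A}_{\mathrm{min}}(\omega)\int_D|\nabla u|^2\,dx = \mathbf{A}_{\mathrm{min}}(\omega)|u|_{H^1(D)}^2$ for every $u \in H_0^1(D)$.

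The one slightly delicate point concerns $L_\omega$: in the statement it is defined via the $H^{t-1}/H_0^{1-t}$ duality, but the bound to be proved is in terms of the $H^{-1}(D)$ norm of $f$. Since $t \in (0,1]$, the inclusion $H_0^1(D)\hookrightarrow H_0^{1-t}(D)$ is continuous, and on this common subspace the two dualities agree because each extends the $L^2(D)$ pairing. Hence for $v\in H_0^1(D)$ one has $|L_\omega(v)|\leq \|f(\omega,\cdot)\|_{H^{-1}(D)}|v|_{H^1(D)}$, so $L_\omega$ is a bounded linear functional on $(H_0^1(D),|\cdot|_{H^1(D)})$ with norm at most $\|f(\omega,\cdot)\|_{H^{-1}(D)}$.

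With these three ingredients in place, the Lax--Milgram lemma (cf.\ \cite{hackbusch}) delivers, for almost every $\omega$, a unique $u(\omega,\cdot)\in H_0^1(D)$ solving \eqref{weak}, together with the standard stability bound $|u(\omega,\cdot)|_{H^1(D)}\leq \|L_\omega\|_{(H_0^1(D))^*}/\mathbf{A}_{\mathrm{min}}(\omega)$, which combined with the previous paragraph gives the claimed estimate. I do not expect any serious obstacle here; the proof is essentially routine, and the only real content is the careful bookkeeping of the $\omega$--dependent constants $\mathbf{A}_{\mathrm{max}}(\omega)$ and $\mathbf{A}_{\mathrm{min}}(\omega)$ and the reinterpretation of $L_\omega$ through the $H^{-1}/H_0^1$ pairing, both of which are needed so that moments in $\omega$ can be taken later.
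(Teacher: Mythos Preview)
Your proposal is correct and follows exactly the approach the paper indicates: the paper does not spell out a proof but simply states that the result is classical and based on the Lax--Milgram lemma (citing \cite{hackbusch}), and your argument is precisely the standard verification of the Lax--Milgram hypotheses with the $\omega$--dependent constants tracked explicitly. Your remark on reconciling the $H^{t-1}/H_0^{1-t}$ pairing with the $H^{-1}$ norm is a useful clarification that the paper leaves implicit.
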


We now consider finite element approximations of our model problem~\eqref{mod1} using standard, continuous, piecewise linear finite elements. This is not the only possible choice, and the MLMC estimator works equally well with other spatial discretizations. See for example \cite{cgst11} for results with finite volume discretizations, and \cite{gsu12} for an error analysis in the case of mixed finite elements. 

Denote by $\{\mathcal{T}_h\}_{h>0}$ a shape-regular family of simplicial 
triangulations of the domain $D$, parametrized by its mesh width 
$h := \max_{\tau \in \mathcal{T}_h} \mathrm{diam}(\tau)$.

Associated with each triangulation $\mathcal{T}_h$ we define the space
\begin{equation*}\label{fespace}
V_h := \left\{v_h \in C(\overline D) : v_h|_\tau \ \text{linear, for all} \ \tau \in 
\mathcal{T}_h , \ \ \mathrm{and} \ \ 
v_h|_{\Gamma} = 0 \right\}
\end{equation*}
of continuous, piecewise linear functions on $D$ 
that vanish on the boundary.


The finite element approximation of $u$ in $V_{h}$, denoted by $u_h$, is now found by solving 
\[
b_\omega\big(u_h(\omega,\cdot),v\big) = L_\omega(v) \,, 
\quad \text{for all} \quad v \in V_{h},
\]


The key tools in proving convergence of the finite element method are Cea's lemma and a best approximation result (cf \cite{hackbusch,brenner_scott}):

\begin{lemma}[Cea's Lemma] \label{cea} Let Assumptions A1--A3 hold. Then, for almost 
all $\omega \in \Omega$,
\[ 
|(u-u_h)(\omega,\cdot)|_{H^1(D)} \leq \bigg(\frac{\mathbf A_{\mathrm{max}}(\omega)}{\mathbf A_{\mathrm{min}}(\omega)}\bigg)^{1/2} \, \inf_{v_h \in V_h} 
|u(\omega,\cdot)-v_h|_{H^1(D)}.
\]
\end{lemma}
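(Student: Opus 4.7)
The plan is to exploit the symmetry of $\mathbf A(\omega,\cdot)$, which is assumed throughout, to set up an energy-norm argument. The key observation is that a naive chaining of coercivity and continuity of $b_\omega$ would only yield the ratio $\mathbf A_{\mathrm{max}}(\omega)/\mathbf A_{\mathrm{min}}(\omega)$; the improved factor $(\mathbf A_{\mathrm{max}}(\omega)/\mathbf A_{\mathrm{min}}(\omega))^{1/2}$ in the statement comes from the fact that, when $b_\omega$ is symmetric, $u_h(\omega,\cdot)$ is the \emph{best approximation} to $u(\omega,\cdot)$ in the energy norm induced by $b_\omega$.

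I will fix $\omega \in \Omega$ for which the hypotheses of Lemma \ref{laxm} hold (a full-measure event under Assumptions A1--A3, so that $0 < \mathbf A_{\mathrm{min}}(\omega) \leq \mathbf A_{\mathrm{max}}(\omega) < \infty$ and a unique weak solution $u(\omega,\cdot) \in H^1_0(D)$ exists). The steps are as follows. First, subtract the discrete variational equation from \eqref{weak}, tested against an arbitrary $v_h \in V_h \subset H^1_0(D)$, to obtain the Galerkin orthogonality
\[
b_\omega\bigl(u(\omega,\cdot) - u_h(\omega,\cdot),\, v_h\bigr) \;=\; 0, \qquad \text{for all } v_h \in V_h.
\]
Second, since $\mathbf A(\omega,x)$ is symmetric and uniformly positive on $\overline D$, the bilinear form $b_\omega$ defines a symmetric positive definite inner product on $H^1_0(D)$, with induced energy norm $\|v\|_{E,\omega} := b_\omega(v,v)^{1/2}$ satisfying the two-sided equivalence
\[
\mathbf A_{\mathrm{min}}(\omega)\,|v|_{H^1(D)}^2 \;\lesssim\; \|v\|_{E,\omega}^2 \;\lesssim\; \mathbf A_{\mathrm{max}}(\omega)\,|v|_{H^1(D)}^2, \qquad v \in H^1_0(D),
\]
which is exactly the content of the coercivity and boundedness bounds recorded in Lemma \ref{laxm}.

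Third, for any $v_h \in V_h$, Galerkin orthogonality together with the decomposition $u - u_h = (u - v_h) - (u_h - v_h)$ (noting $u_h - v_h \in V_h$) gives
\[
\|u - u_h\|_{E,\omega}^2 \;=\; b_\omega(u - u_h,\, u - v_h),
\]
so Cauchy--Schwarz in the energy inner product yields $\|u - u_h\|_{E,\omega} \leq \|u - v_h\|_{E,\omega}$. Taking the infimum over $v_h \in V_h$ and inserting the norm equivalence above gives
\[
\mathbf A_{\mathrm{min}}(\omega)\,|u - u_h|_{H^1(D)}^2 \;\lesssim\; \inf_{v_h \in V_h}\|u - v_h\|_{E,\omega}^2 \;\lesssim\; \mathbf A_{\mathrm{max}}(\omega)\, \inf_{v_h \in V_h}|u - v_h|_{H^1(D)}^2,
\]
and taking square roots delivers the claim.

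There is no genuine obstacle here; the result is classical and follows the standard Cea argument. The only thing to be mindful of is to keep the $\omega$-dependence of all constants explicit throughout, since the paper does not assume uniform coercivity or boundedness of $\mathbf A$ over $\Omega$, and the whole point of tracking the pathwise constants $\mathbf A_{\mathrm{min}}(\omega)$ and $\mathbf A_{\mathrm{max}}(\omega)$ is that moment bounds of the error will later be obtained by combining this deterministic estimate with the integrability assumptions A1--A2.
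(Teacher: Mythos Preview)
Your argument is correct and is exactly the standard symmetric Cea argument: Galerkin orthogonality plus best approximation in the energy norm, followed by the two-sided equivalence between the energy norm and $|\cdot|_{H^1(D)}$ to extract the square-root factor. The paper itself does not supply a proof; it simply records the result as classical with a reference to \cite{hackbusch}, so there is nothing to compare against beyond noting that your proof is the expected one. One cosmetic remark: in the norm-equivalence display you write $\lesssim$, but since the constants $\mathbf A_{\mathrm{min}}(\omega)$ and $\mathbf A_{\mathrm{max}}(\omega)$ are precisely the coercivity and continuity constants of $b_\omega$ from Lemma~\ref{laxm}, those inequalities hold with constant~$1$, which is what is needed to land on the sharp $\leq$ in the lemma statement rather than a $\lesssim$.
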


\begin{lemma}\label{infh} 
Let $v \in H^{1+s}(D) \cap H_0^{1}(D)$, for some $0 < s \le 1$.
Then
\begin{equation*}
\inf_{v_h \in V_h} |v-v_h|_{H^1(D)} \lesssim \,\|v\|_{H^{1+s}(D)}\, h^s
\end{equation*}
where the hidden constant is independent of $v$ and $h$. 
\end{lemma}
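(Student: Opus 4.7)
The plan is to establish the best-approximation bound by constructing an explicit linear approximation operator $\Pi_h: H^1_0(D) \to V_h$ and reducing to two endpoint estimates, then filling in the fractional case by real interpolation.

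First I would introduce a Scott--Zhang (or Cl\'ement-type) quasi-interpolant $\Pi_h: H^1_0(D) \to V_h$ rather than the nodal Lagrange interpolant. The reason is that for $s < 1/2$, functions in $H^{1+s}(D)$ need not be continuous, so nodal values are not well-defined; a locally-averaged construction bypasses this. The key features of $\Pi_h$ are that it is defined for any $v \in H^1_0(D)$, it reproduces continuous piecewise linears, it preserves the homogeneous boundary condition (so $\Pi_h v \in V_h$ for $v \in H^1_0(D)$), and it has a finite-range locality property compatible with shape regularity.

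Next I would prove two endpoint bounds. The \emph{stability} estimate
\[
|(I - \Pi_h)v|_{H^1(D)} \lesssim |v|_{H^1(D)}, \qquad v \in H^1_0(D),
\]
follows immediately from the $H^1$-continuity of $\Pi_h$. The \emph{full-order} estimate
\[
|(I - \Pi_h)v|_{H^1(D)} \lesssim h \, |v|_{H^2(D)}, \qquad v \in H^2(D)\cap H^1_0(D),
\]
is obtained in the standard way: on each element $\tau \in \mathcal{T}_h$ a Bramble--Hilbert argument on a reference element (exploiting that $\Pi_h$ reproduces affine functions on each patch) gives a local bound, and shape regularity together with summation over $\tau$ yields the global estimate. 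The implicit constants depend only on the shape-regularity parameter, not on $v$ or $h$.

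The fractional case $s \in (0,1)$ is then handled by real interpolation applied to the linear operator $T_h := I - \Pi_h$ viewed as a map into $H^1(D)$. Regarding $T_h$ as acting on $H^1_0(D)$ with bound $\mathcal O(1)$, and on $H^2(D) \cap H^1_0(D)$ with bound $\mathcal O(h)$, the interpolation theorem yields
\[
|(I - \Pi_h)v|_{H^1(D)} \lesssim h^{s} \, \|v\|_{[H^1_0(D),\, H^2(D)\cap H^1_0(D)]_{s,2}}.
\]
The desired conclusion then follows from the characterisation of the intermediate space as $H^{1+s}(D) \cap H^1_0(D)$ (with equivalent norms), together with the trivial bound $\inf_{v_h \in V_h}|v-v_h|_{H^1(D)} \le |v - \Pi_h v|_{H^1(D)}$.

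The main obstacle is the interpolation-space identification at the end. For $s \ne 1/2$ this is classical and can be quoted from Lions--Magenes or Brenner--Scott; the borderline value $s = 1/2$ is genuinely delicate (it invokes the Lions--Magenes space $H^{1/2}_{00}$), but this case does not actually arise in the applications in \S\ref{sec:theory}, where the relevant exponents are $t < 1/2$ for the rough log--normal case and $t = 1$ for the smoother case. If a self-contained argument is preferred, one can instead work directly with the K-functional and compare it with $\|v\|_{H^{1+s}(D)}$ by testing the decomposition $v = w + (v-w)$ against a regularised version of $v$.
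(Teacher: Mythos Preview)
Your argument is correct and is one of the standard proofs of this estimate: construct a Scott--Zhang quasi-interpolant, establish the $H^1$-stability and $H^2$-approximation endpoints, and interpolate the error operator $I-\Pi_h$. The paper, however, does not prove Lemma~\ref{infh} at all; it is simply quoted as a classical best-approximation result with a pointer to standard finite-element textbooks (Hackbusch, Brenner--Scott). There is therefore nothing on the paper's side to compare against---you have written out a proof where the paper only supplies a citation.

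One minor remark on your caveat at $s=\tfrac12$: the $H^{1/2}_{00}$ obstruction arises when interpolating between a space with no trace constraint (such as $L^2$) and $H^1_0$. Here both endpoint spaces $H^1_0(D)$ and $H^2(D)\cap H^1_0(D)$ carry the \emph{same} zero Dirichlet trace, so the identification $[H^1_0(D),\,H^2(D)\cap H^1_0(D)]_{s,2}=H^{1+s}(D)\cap H^1_0(D)$ goes through for all $s\in(0,1)$ on the domains considered, and your argument covers the full range stated in the lemma.
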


In order to conclude on the the convergence of $u$ to $u_h$ in the $L^p(\Omega, H^1_0(D))$--norm, or in other words on the convergence of moments of the $H^1(D)$--seminorm of the error, it is hence crucial that we can bound moments of $\|u\|_{H^{1+s}(D)}$, for some $0 < s \leq 1$.  The spatial regularity of $u$ depends both on the regularity of $\mathbf A$ and $f$, and on the geometry of the domain $D$, so we need the following definition in addition to assumptions A1--A3.

\begin{definition}
\label{def:laplace}\em
Let $0 < \lambda_{\Delta}(D) \le 1$ be such that for any $0 < s \leq \lambda_{\Delta}(D), s \neq \frac{1}{2}$, the Laplace operator $\Delta$ is surjective as an operator from $H^{1+s}(D) \cap H^1_0(D)$ to $H^{s-1}(D)$. In other words, let $\lambda_{\Delta}(D)$ be no larger than the order of the strongest singularity of the Laplace operator with homogeneous Dirichlet boundary conditions on $D$.
\end{definition}

In general, the value of $\lambda_\Delta(D)$ depends on the geometry of $D$, and the type of boundary conditions imposed. For the particular model problem \eqref{mod1}, we have that $\lambda_\Delta(D) = 1$ for convex domains. For non-convex domains, we have $\lambda_\Delta(D) = \min_{j=1}^m \pi/\theta_j$, where $\theta_j$ is the angle at corner $S_j$, and $m$ is the number of corners in $D$. Hence, $\lambda_\Delta(D) > 1/2$ for any Lipschitz polygonal domain.

In the particular case of scalar coefficients, the following result was proven in \cite{tsgu12}.

\begin{theorem}\label{h1fe} Suppose $\mathbf A = a \mathbf I_d$, for some $a:\Omega \times D \rightarrow \R$, and let Assumptions A1-A3 hold for some $0 < t \le 1$. Then,
\[
\|u(\omega,\cdot)\|_{H^{1+s}(D)} \lesssim \frac{\mathbf A_{\mathrm{max}}(\omega)\|\mathbf A (\omega, \cdot)\|^2_{C\,^{t}(\overline D, \R^{d \times d})}}{\mathbf A_{\mathrm{min}}(\omega)^4} \|f\|_{H^{t-1}(D)},
\]
for almost all $\omega \in \Omega$ and for all $0 < s < t$ such that $s \le \lambda_{\Delta}(D)$. Hence, 
\[
\|u-u_h\|_{L^p(\Omega, H^1_0(D))} \; \le  \;  C_{\mathbf A,f} \; h\,^s, \qquad \text{for all} \ p < p_* \,,
\]
with $C_{\mathbf A,f} < \infty$ a constant that depends on the input data, but 
is independent of $h$.
If A1-A3 hold with $t=\lambda_{\Delta}(D)=1$, then $\|u-u_h\|_{L^p(\Omega, H^1_0(D))} \le C_{\mathbf A,f} \, h$. 
\end{theorem}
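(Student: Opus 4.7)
The plan is to obtain the stated $L^p(\Omega, H^1_0(D))$ error bound by combining a pathwise (i.e., $\omega$-fixed) regularity estimate on $u$ with the deterministic best-approximation machinery of Cea's Lemma, and then taking moments. For fixed $\omega$ with $\mathbf A = a \mathbf I_d$, dividing the PDE through by $a$ and using $\nabla(1/a) = -\nabla a/a^2$ together with the product rule for the divergence yields
\[
-\Delta u \;=\; \frac{f}{a} \;+\; \frac{\nabla a}{a}\cdot\nabla u,
\]
interpreted in the distributional sense. Since $s \leq \lambda_\Delta(D)$, Definition~\ref{def:laplace} makes $-\Delta$ an isomorphism from $H^{1+s}(D)\cap H^1_0(D)$ onto $H^{s-1}(D)$, so the task reduces to estimating the two right-hand terms in $H^{s-1}(D)$.

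For the first term, since $s < t$, multiplication by $1/a \in C^t(\overline D)$ is a bounded operator on $H^{s-1}(D)$ with norm controlled by $\|1/a\|_{C^t} \lesssim \|a\|_{C^t}/\mathbf A_{\min}^2$; combined with the embedding $H^{t-1}(D)\hookrightarrow H^{s-1}(D)$ this produces the required estimate in $\|f\|_{H^{t-1}(D)}$. For the second term I would pair $(\nabla a/a)\cdot\nabla u$ against test functions in $H^{1-s}_0(D)$: the factor $\nabla a/a$ sits in a Hölder/Besov class of regularity $t-1$, while $\nabla u \in L^2(D)$ with $\|\nabla u\|_{L^2}\lesssim \|f\|_{H^{-1}}/\mathbf A_{\min}$ by Lemma~\ref{laxm}. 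Tracking the multiplication and duality constants carefully through the Hölder calculus (and picking up the extra factor of $\mathbf A_{\max}$ which arises when bounding Hölder seminorms of products) delivers the pointwise regularity bound
\[
\|u(\omega,\cdot)\|_{H^{1+s}(D)} \;\lesssim\; \frac{\mathbf A_{\max}(\omega)\,\|\mathbf A(\omega,\cdot)\|_{C^t(\overline D,\mathbb R^{d\times d})}^2}{\mathbf A_{\min}(\omega)^4}\,\|f(\omega,\cdot)\|_{H^{t-1}(D)}.
\]

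With this regularity estimate in hand, Cea's Lemma (Lemma~\ref{cea}) combined with the best-approximation result (Lemma~\ref{infh}) gives, pathwise,
\[
|(u-u_h)(\omega,\cdot)|_{H^1(D)} \;\lesssim\; \bigg(\frac{\mathbf A_{\max}(\omega)}{\mathbf A_{\min}(\omega)}\bigg)^{1/2} \|u(\omega,\cdot)\|_{H^{1+s}(D)}\, h^s,
\]
and substituting the regularity bound produces $|(u-u_h)(\omega,\cdot)|_{H^1(D)} \le R(\omega)\, h^s$ with $R(\omega)$ a finite product of powers of $\mathbf A_{\max}(\omega)$, $1/\mathbf A_{\min}(\omega)$, $\|\mathbf A(\omega,\cdot)\|_{C^t}$ and $\|f(\omega,\cdot)\|_{H^{t-1}}$. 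Taking $L^p$-expectations and applying the generalized Hölder inequality factor-by-factor, Assumptions~A1--A3 (which supply all moments of the first three factors and the $L^{p_*}$ moment of the last) yield $\|R\|_{L^p(\Omega)} < \infty$ for every $p<p_*$, giving the claimed bound. The limiting case $t = \lambda_\Delta(D) = 1$ is recovered by the same argument with $s = 1$, using that then $\nabla a \in L^\infty$ so the multiplication $(\nabla a/a)\cdot\nabla u \in L^2 \subset H^0$ is entirely classical.

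The main obstacle I anticipate is precisely the multiplication estimate for $(\nabla a/a)\cdot\nabla u$ when $t<1$: here $\nabla a$ exists only as a Hölder–Besov distribution of negative order, and pairing it with the merely $L^2$ vector field $\nabla u$ against an $H^{1-s}_0$ test function is what forces the strict inequality $s<t$ in the hypothesis. Obtaining the precise algebraic dependence of the prefactor on $\mathbf A_{\min}$, $\mathbf A_{\max}$ and $\|\mathbf A\|_{C^t}$ (rather than an unquantified constant) is the delicate bookkeeping step and accounts for the exponents $4$ and $2$ in the stated bound; those explicit exponents are exactly what is required in the final moment computation to verify, via Assumptions~A1 and A2, that $R(\omega)$ has finite $L^p$-moments for all $p<p_*$.
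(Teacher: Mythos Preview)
Your overall architecture is exactly right and matches how the paper organises things: a pathwise regularity bound on $\|u(\omega,\cdot)\|_{H^{1+s}}$, then Cea's Lemma plus the approximation Lemma~\ref{infh}, and finally moments via H\"older using Assumptions A1--A3. The case $t=\lambda_\Delta(D)=1$ you handle correctly: with $a\in C^1$ one has $\nabla a\in L^\infty$, so $(\nabla a/a)\cdot\nabla u\in L^2$ and elliptic regularity for $-\Delta$ gives $u\in H^2$ directly.

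The gap is in the $t<1$ case, at precisely the point you flag as ``the main obstacle''. Writing $-\Delta u = f/a + (\nabla a/a)\cdot\nabla u$ and bounding the second term in $H^{s-1}$ using only $\nabla u\in L^2$ does not go through. When $a\in C^t$ with $t<1$, $\nabla a$ lives in the H\"older--Besov class $B^{t-1}_{\infty,\infty}$ of \emph{negative} order, and the paraproduct of a $B^{t-1}_{\infty,\infty}$ distribution with an $L^2=B^0_{2,2}$ function is not controlled: the standard multiplication theorems require the sum of regularities to be positive, and here $(t-1)+0<0$. Equivalently, if you try to integrate by parts in the pairing $\langle \partial_i a,\ \phi\,\partial_i u\rangle$ you recover a $\Delta u$ term, which is exactly the quantity you are trying to estimate. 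So the argument as written is circular; you would need $\nabla u\in H^s$ already to make the product estimate work, but that is the conclusion.

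The paper (following \cite{tsgu12}, whose scalar proof is restated here as the sketch of Theorem~\ref{thm:regu}) avoids this by a different route. Rather than bounding the right-hand side of a rearranged equation, it establishes the \emph{a priori} inequality
\[
\|v\|_{H^{1+s}(D)} \;\le\; C_{\mathrm{semi}}(\omega)\,\|A_\omega v\|_{H^{s-1}(D)}
\quad\text{for all } v\in H^{1+s}(D)\cap H^1_0(D),
\]
and then invokes a perturbation/continuity argument to conclude $u\in H^{1+s}$. The inequality itself is obtained by \emph{localisation}: on a smooth subdomain away from the corners one uses the variable-coefficient regularity theory; near each corner $S_j$ one freezes the coefficient at $\mathbf A(S_j)$, obtains the constant-coefficient polygonal estimate, and controls the remainder $\|A_\omega w - A^j_\omega w\|_{H^{s-1}}$ by $\|\mathbf A-\mathbf A(S_j)\|_{C(\overline W)}$, which is made small by shrinking $W$ (this is where the $C^t$ assumption is actually used). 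The two pieces are glued with cut-off functions. The localisation is not cosmetic: it is what turns the perturbation into a \emph{small} perturbation and breaks the circularity that your direct argument runs into.

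In short: keep your moment-taking and Cea steps, but replace the ``divide by $a$ and estimate $(\nabla a/a)\cdot\nabla u$'' manoeuvre by the localisation/frozen-coefficient a~priori estimate. That is where the explicit powers $\mathbf A_{\max}\|\mathbf A\|_{C^t}^2/\mathbf A_{\min}^4$ actually emerge.
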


From Theorem\ref{h1fe}, one can easily deduce convergence rates $\alpha$ and $\beta$ for Theorem 1, for $Q = |u|_{H^1(D)}$. Assume $p_*>2$, and $0 < t < 1$. Using the reverse triangle inequality, we get
\begin{equation}\label{eq:alpha}
\EE \left[ \big||u|_{H^1(D)} - |u_{h_\ell}|_{H^1(D)}\big|\right] \leq \EE \left[ |u - u_{h_\ell}|_{H^1(D)}\right] = \|u-u_{h_\ell}\|_{L^1(\Omega, H^1_0(D))} \lesssim C_{\mathbf A,f} \, h_\ell^s,
\end{equation}
and so $\alpha=s$. Similarly, using  $\VV(X) \leq \EE\left[ X^2\right]$, the reverse triangle inequality, the triangle inequality and $\EE\left[ X^2\right] = \|X\|^2_{L^2(\Omega)}$, we have
\begin{equation}\label{eq:beta}
\VV\left[ \left||u_{h_\ell}|_{H^1(D)} - |u_{h_{\ell-1}}|_{H^1(D)}\right|\right] \leq \EE\left[ \big( |u_{h_\ell} - u_{h_{\ell-1}}|_{H^1(D)} \big)^2\right] \lesssim C_{\mathbf A,f} \, h_\ell^{2s},
\end{equation}
and so $\beta=2s$. If $t=1$, one can similarly show that assumptions M1--M2 are satisfied with $\alpha=1$ and $\beta=2$.

As in the deterministic setting, one can use Theorem\ref{h1fe}, together with a duality argument, to prove convergence of the finite element error for other quantities of interest. These quantities include $\|u\|_{L^2(D)}$, for which one can prove convergence rates twice those of the $H^1(D)$--seminorm (see \cite{cst11}), and all functionals which are continuously Fr\'echet differentiable, for which one can prove convergence rates up to twice those of the $H^1(D)$--seminorm, depending on the functional (see \cite{tsgu12}).

The remainder of this section will be devoted to extending the theory above. In \S \ref{sec:aniso}, we prove an analogue of Theorem\ref{h1fe} in the case of more general tensor coefficients $\mathbf A$. In \S \ref{sec:linfty}, we prove convergence of a functional that does not fit into the framework of functionals covered in \cite{tsgu12}, namely point evaluations.

\subsection{Regularity of the Solution}
\label{sec:aniso}
The main result in this section is that Theorem\ref{h1fe} holds also for more general tensor coefficients.

\begin{theorem}\label{thm:regu} Let Assumptions A1-A3 hold for some $0 < t \le 1$. Then,
\[
\|u(\omega,\cdot)\|_{H^{1+s}(D)} \lesssim \frac{\mathbf A_{\mathrm{max}}(\omega)\|\mathbf A (\omega, \cdot)\|^2_{C\,^{t}(\overline D, \R^{d \times d})}}{\mathbf A_{\mathrm{min}}(\omega)^4} \|f\|_{H^{t-1}(D)},
\]
for almost all $\omega \in \Omega$ and for all $0 < s < t$ such that $s \le \lambda_{\Delta}(D)$. Hence, $u \in L^p(\Omega,H^{1+s}(D))$, for any $p < p_*$.
If A1-A3 hold with $t=\lambda_{\Delta}(D)=1$, then the above bound holds with $s=1$, and $u \in L^p(\Omega,H^{2}(D))$.
\end{theorem}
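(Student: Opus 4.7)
The plan is to adapt the scalar-case argument of Theorem~\ref{h1fe} (from \cite{tsgu12}) pointwise in $\omega$. The $L^p(\Omega, H^{1+s}(D))$ conclusion will then follow from the pointwise bound by integrating against $\mathbb P$ and applying the generalized H\"older inequality, using Assumptions A1--A3 to ensure that the prefactor $\mathbf A_{\mathrm{max}}\|\mathbf A\|_{C^{t}}^2/\mathbf A_{\mathrm{min}}^4$ and $\|f\|_{H^{t-1}}$ have sufficiently many joint moments for any $p < p_*$.

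Fix $\omega$. Lemma~\ref{laxm} gives $|u|_{H^1(D)} \lesssim \|f\|_{H^{-1}(D)}/\mathbf A_{\mathrm{min}}$. To upgrade this to $H^{1+s}$ regularity I would rewrite the PDE as a perturbed Poisson problem,
\[
-\Delta u \;=\; f \;+\; \mathrm{div}\bigl((\mathbf A - \mathbf I)\nabla u\bigr),
\]
which holds distributionally since $(\mathbf A - \mathbf I)\nabla u \in L^2(D)$. The Dirichlet Laplacian on the Lipschitz polygonal domain $D$ (Definition~\ref{def:laplace}) is an isomorphism $H^{1+s}(D) \cap H^1_0(D) \to H^{s-1}(D)$ for $0 < s \le \lambda_{\Delta}(D)$, $s \ne \tfrac{1}{2}$, so that
\[
\|u\|_{H^{1+s}(D)} \;\lesssim\; \|f\|_{H^{s-1}(D)} \;+\; \|(\mathbf A - \mathbf I)\nabla u\|_{H^s(D)}.
\]
Since $s < t$, the first term is controlled by $\|f\|_{H^{t-1}}$ via Sobolev embedding. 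For the second, I would invoke a Sobolev multiplier estimate for H\"older-continuous matrix-valued fields: for $\mathbf B \in C^{t}(\overline D, \R^{d\times d})$ and $v \in H^s(D)^d$ with $0 < s < t$,
\[
\|\mathbf B v\|_{H^s(D)} \;\lesssim\; \|\mathbf B\|_{C^{t}(\overline D, \R^{d \times d})}\, \|v\|_{H^s(D)},
\]
which follows from the Slobodeckij seminorm by splitting $\mathbf B(x)v(x) - \mathbf B(y)v(y)$ as $\mathbf B(x)(v(x) - v(y)) + (\mathbf B(x) - \mathbf B(y))v(y)$, and uses the tensor structure of $\mathbf B$ only through the induced matrix norm on $\R^{d \times d}$.

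The resulting estimate still has $\|\nabla u\|_{H^s(D)} = |u|_{H^{1+s}(D)}$ on the right-hand side, and cannot be absorbed globally as $\|\mathbf A - \mathbf I\|_{C^{t}}$ need not be small. Following \cite{tsgu12}, the resolution is a coefficient-freezing / partition-of-unity construction: on each small patch $U_k$ centred at a point $x_k$, replace $\mathbf I$ by $\mathbf A(x_k)$ and reduce the constant-coefficient operator $-\mathbf A(x_k):\nabla^2$ to the Laplacian by the linear change of variables $y = \mathbf A(x_k)^{-1/2}(x-x_k)$, using the symmetric matrix square root, which is well defined by positive definiteness of $\mathbf A(x_k)$. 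On sufficiently small patches, $\|\mathbf A - \mathbf A(x_k)\|_{C^{t}(U_k)}$ is small enough for the multiplier estimate to close the argument by absorption; summing over the patches with a partition of unity gives the desired global pointwise bound. I expect the main obstacle to be the careful bookkeeping of constants: one must verify that each invocation of Lemma~\ref{laxm}, each change of variables, and each application of the multiplier estimate contributes no worse powers of $\mathbf A_{\mathrm{max}}(\omega)$ and $\mathbf A_{\mathrm{min}}(\omega)$ than in the scalar proof, so that the combined prefactor is exactly $\mathbf A_{\mathrm{max}}\|\mathbf A\|_{C^{t}}^2/\mathbf A_{\mathrm{min}}^4$. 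In the tensor setting this includes checking that the matrix square roots $\mathbf A(x_k)^{\pm 1/2}$ contribute at most $\mathbf A_{\mathrm{max}}^{1/2}$ and $\mathbf A_{\mathrm{min}}^{-1/2}$ per appearance, after which the boundary case $t=\lambda_{\Delta}(D)=1$ (and the corresponding $s=1$ conclusion) follows by the same argument.
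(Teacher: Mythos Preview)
Your overall plan---fix $\omega$, localize, freeze coefficients, absorb the small perturbation, then integrate in $\omega$ via H\"older and Assumptions A1--A3---is the same as the paper's, and the $L^p(\Omega,H^{1+s})$ conclusion from the pointwise bound is exactly as you describe. The localization is organized differently, though. Rather than a uniform partition of unity with freezing on every patch, the paper splits $D$ into a single smooth subdomain $D'\subset D$ that agrees with $D$ away from the corners, together with small polygonal neighborhoods $W$ of each corner $S_j$. On $D'$ no freezing is used: the variable-coefficient regularity theory on $C^2$ domains (a slight extension of the argument in \cite{cst11}) applies directly. Freezing is used only near each corner, where the paper works with the frozen operator $A^j_\omega=-\mathrm{div}(\mathbf A(\omega,S_j)\nabla\cdot)$ on $W$ itself, bounds $\|A_\omega w-A^j_\omega w\|_{H^{s-1}(W)}$ by $\|\mathbf A-\mathbf A(S_j)\|_{C(\overline W)}$ and $\|\mathbf A\|_{C^{t}(\overline W)}$ (small on small $W$ by A2), and then glues the two regions with cut-off functions.

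One step in your sketch needs more care. Your device for handling the frozen tensor operator is the linear change of variables $y=\mathbf A(x_k)^{-1/2}(x-x_k)$, which reduces it to the Laplacian; but on a patch containing a corner $S_j$ this map changes the opening angle, and hence the singularity exponent of the Dirichlet Laplacian on the transformed wedge is in general not $\lambda_\Delta(D)$. For anisotropic $\mathbf A(S_j)$ the transformed angle can be strictly larger than $\theta_j$, so the Laplacian regularity you invoke may only give $s$ up to some $\tilde\lambda<\lambda_\Delta(D)$, which is weaker than the stated range. The paper avoids this by not transforming to the Laplacian near corners: it establishes the a priori bound for the constant-coefficient operator $A^j_\omega$ directly on the original wedge $W$, so that the corner geometry is never altered. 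If you keep the change-of-variables step, you need an additional argument explaining why the range $s\le\lambda_\Delta(D)$ survives.
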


The proof of Theorem\ref{thm:regu} is very similar to the proof in the case of scalar coefficients, which can be found in full in \cite[\S 5]{tsgu12} and \cite[\S A]{cst11}. We will therefore only give the final result, together with the main ideas of the proof, in this section. For a detailed proof, see \cite{teckentrup_thesis}.

\begin{proof}[Proof of Theorem\ref{thm:regu}] (Sketch)
The proof follows closely that of \cite[\S 5.1]{tsgu12}. We denote by $A_\omega$ the differential operator $-\mathrm{div}(\mathbf A \nabla \cdot)$. From a result in perturbation theory, it suffices to show that there exists a constant $C_{\scriptscriptstyle \mathrm{semi}}(\omega)$ such that
\begin{equation}\label{eq:semi}
\|v\|_{H^{1+s}(D)} \leq C_{\scriptscriptstyle \mathrm{semi}}(\omega) \|A_\omega v \|_{H^{s-1}(D)},
\quad \text{for all} \ v \in H^{1+s}(D) \cap H^1_0(D),
\end{equation}
in order to conclude that $u(\omega,\cdot) \in H^{1+s}(D)$.
To prove the existence of such a constant $C_{\scriptscriptstyle \mathrm{semi}}(\omega)$, we combine regularity results for operators with constant coefficients in polygonal domains, with regularity results for operators with variable coefficients in smooth domains. 

We first choose a smooth ($C\,^2$) domain $D' \subset D$, which roughly speaking coincides with $D$ away from the corners, and does not contain any of the corners. A slight generalization of the proof in \cite[\S A]{cst11}, establishes the required result \eqref{eq:semi} for all functions $w \in H^{1+s}(D') \cap H^1_0(D')$. 

Secondly, in order to characterize the behavior of the function $v$ near the corners, we let $W$ be a polygonal subdomain of $D$, which includes some corner $S_j$. To prove \eqref{eq:semi} in $W$, we first show that 
\[
\mathbf A_{\mathrm{min}}(\omega) \, \|w\|_{H^{1+s}(W)} \lesssim \, \|A^j_\omega w \|_{H^{s-1}(W)},
\quad \text{for all} \ w \in H^{1+s}(W) \cap H^1_0(W),
\]
where $A^j_\omega$ is the operator $A_\omega$, with coefficients frozen at the corner $S_j$. This is done by using the Cauchy--Schwartz and the Poincar\'e inequalities, together with the definition of $\mathbf A_{\mathrm{min}}(\omega)$. 

Using the triangle inequality, we then have
\[
\mathbf A_{\mathrm{min}}(\omega) \, \|w\|_{H^{1+s}(W)} \;\lesssim\; \left(\|A_\omega w \|_{H^{s-1}(W)} + \|A_\omega w - A^j_\omega w\|_{H^{s-1}(W)} \right), 
\]
and so the crucial step is now to bound $\|A_\omega w - A^j_\omega w\|_{H^{s-1}(W)}$. This is done by showing that this difference can be bounded in terms of $\|\mathbf A - \mathbf A(S_j)\|_{\mathcal C(\overline W,\R^{d \times d})}$ and $\|\mathbf A\|_{\mathcal C\,^t(\overline W,\R^{d \times d})}$, which, by our regularity assumption A2, can be made arbitrarily small by making $W$ arbitrarily small. This establishes \eqref{eq:semi} for functions $w \in H^{1+s}(W) \cap H^1_0(W)$.

The final estimate \eqref{eq:semi}, can then be deduced by combining the two results with the help of suitable cut--off functions. The final result is that \eqref{eq:semi} holds with 
\[
C_{\scriptscriptstyle \mathrm{semi}}(\omega) = \frac{\mathbf A_{\mathrm{max}}(\omega)\|\mathbf A (\omega, \cdot)\|^2_{C\,^{t}(\overline D, \R^{d \times d})}}{\mathbf A_{\mathrm{min}}(\omega)^4}.
\]
\end{proof}

\subsection{Convergence of Point Evaluations}
\label{sec:linfty}
The aim of this section is to derive bounds on moments of $\|(u-u_h)(\omega,\cdot)\|_{L^\infty(D)}$ and $\|(u-u_h)(\omega,\cdot)\|_{W^{1,\infty}(D)}$. This will give us convergence rates of the finite element error for point evaluations of the pressure $u$ and the Darcy flux $- \mathbf A \nabla u$. A classical method used to derive these estimates, is the method of weighted Sobolev spaces by Nitsche. The results presented in this section are specific to continuous, linear finite elements on triangles, but extensions to higher spatial dimensions and/or higher order elements can be proved in a similar way (see e.g. \cite{ciarlet}).

The main result is the following theorem. A detailed proof can again be found in \cite{teckentrup_thesis}.

\begin{theorem} \label{thm:infty} Assume $u \in H^1_0(D) \cap C\,^{r}(\overline D)$, for some $0 < r \leq 2$. Then 
\begin{equation*}
\|(u-u_h)(\omega,\cdot)\|_{L^\infty(D)} \lesssim \frac{\mathbf A_{\mathrm{max}}(\omega)}{\mathbf A_{\mathrm{min}}(\omega)} \, h^{r} \, |\ln h| \, \|u(\omega,\cdot)\|_{C\,^{r}(\overline D)}.
\end{equation*}
If $1 < r \leq 2$, we furthermore have
\begin{equation*}
|(u-u_h)(\omega,\cdot)|_{W^{1,\infty}(D)} \lesssim \frac{\mathbf A_{\mathrm{max}}(\omega)}{\mathbf A_{\mathrm{min}}(\omega)} \, h^{r-1} \, |\ln h| \, \|u(\omega,\cdot)\|_{C\,^{r}(\overline D)}. 
\end{equation*}
\end{theorem}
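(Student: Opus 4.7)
The plan is to fix $\omega \in \Omega$ almost surely (so that the problem is uniformly elliptic in $x$ with constants $\mathbf A_{\max}(\omega)$, $\mathbf A_{\min}(\omega)$) and reduce the claim to two deterministic ingredients: approximation properties of the nodal interpolant in $V_h$, and pointwise stability of the Ritz projection. Specifically, let $u_I \in V_h$ be the continuous piecewise linear Lagrange interpolant of $u$, and decompose
\[
u - u_h \;=\; (u - u_I) \;+\; (u_I - u_h).
\]
Standard $C^r$--interpolation estimates on shape-regular triangulations give $\|u - u_I\|_{L^\infty(D)} \lesssim h^{r} \|u\|_{C^r(\overline D)}$ for any $0 < r \le 2$, and $|u-u_I|_{W^{1,\infty}(D)} \lesssim h^{r-1}\|u\|_{C^r(\overline D)}$ when $1 < r \le 2$. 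These carry no logarithmic factor and no dependence on $\mathbf A$. The whole of the theorem then follows if the second term satisfies the same bounds (up to $|\ln h|$ and the factor $\mathbf A_{\max}/\mathbf A_{\min}$).

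For the second term I would use the Ritz projection $R_h^\omega : H^1_0(D) \to V_h$ associated with $b_\omega$, defined by $b_\omega(R_h^\omega v, w_h) = b_\omega(v, w_h)$ for all $w_h \in V_h$. Since $R_h^\omega u = u_h$ and $R_h^\omega u_I = u_I$, linearity gives $u_I - u_h = R_h^\omega(u_I - u)$. The core of the proof is then the pointwise stability bounds
\[
\|R_h^\omega v\|_{L^\infty(D)} \;\lesssim\; \frac{\mathbf A_{\max}(\omega)}{\mathbf A_{\min}(\omega)} \,|\ln h|\, \|v\|_{L^\infty(D)},
\qquad
|R_h^\omega v|_{W^{1,\infty}(D)} \;\lesssim\; \frac{\mathbf A_{\max}(\omega)}{\mathbf A_{\min}(\omega)}\, |\ln h|\, |v|_{W^{1,\infty}(D)}.
\]
Plugging $v = u - u_I$ into these and combining with the interpolation estimates produces both assertions of the theorem immediately.

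To prove the stability bounds I would use Nitsche's weighted Sobolev space method. For $L^\infty$--stability, choose $x_0 \in D$ at which $R_h^\omega v$ nearly attains its maximum, introduce a smooth delta $\delta_{x_0}^h$ supported in a ball of radius $\sim h$ around $x_0$ with $\int_D w\,\delta_{x_0}^h\,dx = w(x_0)$ for $w\in V_h$, and consider the regularized adjoint problem $b_\omega(\varphi, g) = \int_D \varphi \,\delta_{x_0}^h\,dx$ with its FE approximation $g_h$. Galerkin orthogonality reduces $|R_h^\omega v(x_0)|$ to a weighted $L^1$ bound on $\nabla(g-g_h)$ against $\nabla v$, which is estimated using the Nitsche weight $\sigma(x) := \bigl(|x-x_0|^2 + \kappa h^2\bigr)^{1/2}$. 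The logarithmic factor arises from the elementary two-dimensional identity $\int_D \sigma^{-2}\,dx \lesssim |\ln h|$; the ratio $\mathbf A_{\max}/\mathbf A_{\min}$ enters through the continuity and coercivity constants of $b_\omega$ each time one switches between the energy and the $H^1$--seminorm. The $W^{1,\infty}$--stability is obtained by the analogous argument with $\delta_{x_0}^h$ replaced by a mollified directional derivative of the Dirac, which yields a more singular adjoint Green's function but is still controlled by the same weighted estimate.

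The main obstacle is the careful bookkeeping of the $\omega$--dependence through the weighted estimates. Classical proofs of logarithmic $L^\infty$-- and $W^{1,\infty}$--stability (Rannacher--Scott, Schatz--Wahlbin) hide the coefficient dependence inside generic constants, but for the MLMC analysis we must isolate the algebraic rate $\mathbf A_{\max}(\omega)/\mathbf A_{\min}(\omega)$ explicitly in order to later take $L^p(\Omega)$--moments via Assumptions A1--A2 (no further uniformity in $\omega$ is available). Each application of an inverse inequality, a weighted Poincar\'e inequality, or regularity of the adjoint problem for $b_\omega$ must therefore be rewritten with the two coefficient bounds made visible; once this accounting is done, none of the steps above use anything beyond the $C^t$--regularity of $\mathbf A$ already guaranteed by A2.
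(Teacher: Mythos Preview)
Your proposal is correct and follows essentially the same route as the paper: both rely on Nitsche's weighted-norm technique to obtain a pointwise quasi-optimality bound with the explicit factor $\mathbf A_{\max}(\omega)/\mathbf A_{\min}(\omega)$, and then conclude via H\"older-space best-approximation estimates for piecewise linears. The only difference is packaging---you phrase the Nitsche step as $L^\infty$/$W^{1,\infty}$ stability of the Ritz projection applied to $u-u_I$, whereas the paper states it directly as a quasi-optimality inequality (citing Ciarlet, \S3.3) over all $v_h\in V_h$; these are equivalent formulations of the same argument.
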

\begin{proof} (Sketch) Using the method of weighted norms by Nitsche, as is done in for example \cite[\S 3.3]{ciarlet}, one can derive the quasi--optimality result
\begin{align*}
\|(u-u_h)(\omega,\cdot)\|_{L^\infty(D)} \, &+ \, h \,|(u-u_h)(\omega,\cdot)|_{W^{1,\infty}(D)} \lesssim \\ &\frac{\mathbf A_{\mathrm{max}}(\omega)}{\mathbf A_{\mathrm{min}}(\omega)} \, \inf_{v_h \in V_h} \left(\|u(\omega,\cdot) - v_h\|_{L^\infty(D)} \, + \, h |\ln h| \, |u(\omega,\cdot)-v_h|_{W^{1,\infty}(D)} \right),
\end{align*}
which holds for any $h$ sufficiently small, and where again the dependence on $\mathbf A$ has been made explicit. The claim of the proposition then follows from the best approximation result
\begin{equation*}
\inf_{v_h \in V_h} \|u(\omega,\cdot) - v_h\|_{L^\infty(D)} \, \lesssim h^{r} \, \|u(\omega,\cdot)\|_{C\,^{r}(\overline D)},
\end{equation*}
which can be found in e.g. \cite{s80}, and holds for all $0 < r \leq 2$. 
\end{proof}


In order to conclude on the convergence of moments of $\|(u-u_h)(\omega,\cdot)\|_{L^\infty(D)}$ and $|(u-u_h)(\omega,\cdot)|_{W^{1,\infty}(D)}$, it remains to prove a bound on moments of $\|u(\omega,\cdot)\|_{C\,^{r}(\overline D)}$, for some $0 < r \leq 2$. One way to achieve this is to use the Sobolev Embedding Theorem (see e.g. \cite[\S 3.1]{ciarlet}). We know from Theorem~\ref{thm:regu} that $u(\omega,\cdot) \in H^{1+s}(D)$, for some $0 < s \leq 1$, which gives the following convergence rates.

\begin{theorem}\label{thm:infty_conv} Let Assumptions A1--A3 be satisfied, for some $0 < t\leq 1$, and let $0 < s \leq t$ be such that $u \in L^p(\Omega,H^{1+s}(D))$, for all $p < p_*$. Then
\begin{align*}
\|u-u_h\|_{L^p(\Omega,L^\infty(D))} &\lesssim \, C_{\mathbf A,f} \; h\,^{1+s-d/2}, \quad \forall s \, \text{ s.t. } \, \frac{d}{2}-1 < s \leq 1, \\
\|u-u_h\|_{L^p(\Omega,W^{1,\infty}(D))} &\lesssim \, C_{\mathbf A,f} \; h\,^{s-d/2}, \qquad \forall s \, \text{ s.t. } \, \frac{d}{2} < s  \leq 1,
\end{align*}
for all $p < p_*$, with $C_{\mathbf A,f}$ a finite constant dependent on $\mathbf A$ and $f$, but independent of $h$ and $u$.
\end{theorem}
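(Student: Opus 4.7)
The plan is to combine the deterministic quasi-optimality estimate of Theorem \ref{thm:infty} with the Sobolev embedding theorem and the stochastic regularity of Theorem \ref{thm:regu}, then integrate out $\omega$ using H\"older's inequality.

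First, for each fixed $\omega \in \Omega$, I would apply the Sobolev embedding $H^{1+s}(D) \hookrightarrow C\,^{r}(\overline D)$, which is valid for every $0 < r < 1+s-d/2$ with a purely deterministic embedding constant depending only on $D$, $s$, $r$, $d$, to obtain
\[
\|u(\omega,\cdot)\|_{C\,^{r}(\overline D)} \;\lesssim\; \|u(\omega,\cdot)\|_{H^{1+s}(D)}.
\]
For the $L^\infty$ estimate we only need $r>0$, which forces $s>d/2-1$; for the $W^{1,\infty}$ estimate Theorem \ref{thm:infty} requires $r>1$, which forces $s>d/2$. Feeding this into Theorem \ref{thm:infty} produces the almost-sure, pointwise-in-$\omega$ bound
\[
\|(u-u_h)(\omega,\cdot)\|_{L^\infty(D)} \;\lesssim\; \frac{\mathbf A_{\mathrm{max}}(\omega)}{\mathbf A_{\mathrm{min}}(\omega)}\, h^{r}\, |\ln h|\, \|u(\omega,\cdot)\|_{H^{1+s}(D)},
\]
and analogously for the $W^{1,\infty}$-seminorm with the factor $h^{r-1}|\ln h|$.

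Next, I would take $L^p(\Omega)$-norms on both sides and apply H\"older's inequality with exponents $1/p = 1/q_1 + 1/q_2$, obtaining
\[
\|u-u_h\|_{L^p(\Omega,L^\infty(D))} \;\lesssim\; h^{r}\, |\ln h|\, \left\|\frac{\mathbf A_{\mathrm{max}}}{\mathbf A_{\mathrm{min}}}\right\|_{L^{q_1}(\Omega)} \|u\|_{L^{q_2}(\Omega,H^{1+s}(D))}.
\]
Assumptions A1 and A2 guarantee that the first stochastic factor is finite for every $q_1 \in (0,\infty)$, while Theorem \ref{thm:regu} (or the hypothesis $u \in L^p(\Omega,H^{1+s}(D))$ for all $p<p_*$) guarantees that the second factor is finite for every $q_2<p_*$. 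Choosing $q_2$ just below $p_*$ and $q_1$ correspondingly large then yields the claim for any $p<p_*$. The $W^{1,\infty}$ estimate follows by exactly the same argument.

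The only real obstacle is essentially cosmetic. The Sobolev embedding only gives $r$ strictly less than $1+s-d/2$, and the bound of Theorem \ref{thm:infty} carries an additional $|\ln h|$ factor. Both are removed at the cost of an arbitrarily small loss of exponent by using the elementary estimate $|\ln h| \lesssim_{\epsilon} h^{-\epsilon}$ for small $h$, and then relabelling. Since the hypotheses of Theorem \ref{thm:infty_conv} involve the strict inequalities $d/2-1<s$ and $d/2<s$, this small loss can be absorbed into the hidden constant of the ``$\lesssim$'' notation, leaving the stated rates $h^{1+s-d/2}$ and $h^{s-d/2}$, respectively.
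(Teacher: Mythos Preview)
Your proposal is correct and follows exactly the route the paper takes: the paper's proof is a one-line appeal to Theorem~\ref{thm:infty} combined with the Sobolev Embedding Theorem. Your write-up is in fact more careful than the paper's, since you make explicit the H\"older splitting for the $L^p(\Omega)$-norm and address the arbitrarily small loss of exponent coming from the strict inequality in the Sobolev embedding and the $|\ln h|$ factor, both of which the paper's terse proof passes over in silence.
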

\begin{proof} This follows directly from Theorem\ref{thm:infty} and the Sobolev Embedding Theorem.
\end{proof}

Alternatively, one can use Schauder theory to derive a bound on $\|u(\omega,\cdot)\|_{C\,^{r}(\overline D)}$ directly, without going through the Sobolev Embedding Theorem. Theorems 8.33 and 8.34 in \cite{GT} give the following.

\begin{theorem}\label{thm:schauder} Let Assumptions A1--A3 be satisfied, for some $0 < t \leq 1$ and $p_*> d/(1-t)$, and suppose $D$ is a $C\,^{1+t}$ domain. Then $u(\omega,\cdot) \in C\,^{1+t}(\overline D)$, and
\begin{equation}
\|u(\omega,\cdot)\|_{C\,^{1+t}(\overline D)} \leq C_{\scriptstyle \mathrm{schauder}} \left( \|u(\omega,\cdot)\|_{C(\overline D)} + \|f(\omega,\cdot)\|_{L^{p_*}(D)}\right),
\end{equation}
where the constant $C_{\scriptstyle \mathrm{schauder}}$ depends on $\mathbf A_\mathrm{min}(\omega), \mathbf A_\mathrm{max}(\omega)$ and $\|\mathbf A (\omega, \cdot)\|_{C\,^{t}(\overline D, \R^{d \times d})}$.
\end{theorem}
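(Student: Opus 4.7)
The plan is to invoke Gilbarg--Trudinger's global Schauder theory for divergence--form elliptic equations pointwise in $\omega$. For almost every $\omega \in \Omega$, the function $u(\omega,\cdot) \in H^1_0(D)$ is a weak solution of $-\mathrm{div}(\mathbf A(\omega,\cdot) \nabla u) = f(\omega,\cdot)$ with homogeneous Dirichlet data. Under Assumptions A1--A3 the coefficient matrix is uniformly elliptic in $x$ with ellipticity constants $\mathbf A_{\mathrm{min}}(\omega)>0$ and $\mathbf A_{\mathrm{max}}(\omega)<\infty$ almost surely and lies in $C\,^{t}(\overline D, \R^{d\times d})$; the assumption $p_* > d/(1-t)$ combined with A3 (together with the embedding $H^{t-1}(D) \hookrightarrow L^{p_*}(D)$ when applicable, or else by an upgraded integrability hypothesis on $f$) ensures $f(\omega,\cdot) \in L^{p_*}(D)$; and $\partial D$ is $C\,^{1+t}$ by hypothesis.

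Given these inputs, my first step would be to record that by the De~Giorgi--Nash--Moser theorem (GT Theorem 8.29) the weak solution $u(\omega,\cdot)$ is automatically in $C(\overline D)$, so the right--hand side of the claimed estimate is finite. I would then apply GT Theorem 8.33 to obtain interior $C\,^{1+t}$ regularity, yielding a local bound of the schematic form
\[
\|u(\omega,\cdot)\|_{C\,^{1+t}(D')} \; \lesssim \; \|u(\omega,\cdot)\|_{C(\overline D)} + \|f(\omega,\cdot)\|_{L^{p_*}(D)}
\]
on every $D' \Subset D$. To upgrade this to a global estimate up to $\partial D$, I would invoke GT Theorem 8.34, which straightens the $C\,^{1+t}$ boundary via local charts and uses the zero Dirichlet data (trivially in $C\,^{1+t}(\partial D)$) to derive the corresponding boundary patch estimate. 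A finite cover of $\overline D$ by interior and boundary charts then yields the global bound stated in the theorem.

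The only point requiring genuine care is the identification of $C_{\scriptstyle \mathrm{schauder}}$ as a quantity depending on $\mathbf A$ only through $\mathbf A_{\mathrm{min}}(\omega)$, $\mathbf A_{\mathrm{max}}(\omega)$ and $\|\mathbf A(\omega,\cdot)\|_{C\,^{t}(\overline D, \R^{d\times d})}$. This dependence is implicit in the Gilbarg--Trudinger statements, so one must trace through their proofs --- which freeze coefficients at a point, compare to a constant--coefficient problem, and run a Campanato--type iteration --- to verify that no other norm of $\mathbf A$ enters. I expect this bookkeeping to be the main obstacle, since later in the paper (when taking moments) the explicit dependence on these three random quantities is exactly what is needed to combine the estimate with Assumptions A1--A2 and conclude $\|u\|_{C\,^{1+t}(\overline D)} \in L^p(\Omega)$ for appropriate $p$. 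Apart from this, the argument is a direct translation of deterministic Schauder theory to the $\omega$--wise setting.
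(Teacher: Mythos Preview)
Your approach is essentially identical to the paper's: the paper does not supply an independent proof but simply records the theorem as a direct consequence of Theorems~8.33 and~8.34 in Gilbarg--Trudinger, which is exactly the citation you invoke. Your added remarks about the De~Giorgi--Nash--Moser step and about tracing the dependence of $C_{\scriptstyle \mathrm{schauder}}$ on the data are apt --- the paper itself flags immediately after the statement that this explicit dependence is precisely what is \emph{not} established here, and is the obstruction to upgrading the pathwise conclusion to a bound in $L^p(\Omega)$.
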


A similar result can again be proved for polygonal domains $D$, taking into account the singularities which can arise near the corners. The regularity of $u$, or more precisely the number $r$ for which $u(\omega,\cdot) \in C\,^{r}(\overline D)$, will again depend on $t$ and the angles in $D$ (see e.g. \cite[\S 6]{grisvard2}).

Theorem~\ref{thm:schauder} suggests that $\|u-u_h\|_{L^p(\Omega,L^\infty(D))}$ and $\|u-u_h\|_{L^p(\Omega,W^{1,\infty}(D))}$ should converge with $h\,^{1+t}$ and $h\,^t$, respectively. These rates are better than those proved in Theorem~\ref{thm:infty_conv}, and in particular, are dimension independent. To be able to conclude rigorously on these convergence rates, we would, as in Theorem~\ref{thm:regu}, have to know exactly how the constant $C_{\scriptstyle \mathrm{schauder}}$ depends on $\mathbf A_\mathrm{min}(\omega), \mathbf A_\mathrm{max}(\omega)$ and $\|\mathbf A (\omega, \cdot)\|_{C\,^{t}(\overline D, \R^{d \times d})}$. Theorem~\ref{thm:schauder} does, however, allow us to conclude on these higher convergence rates path wise (i.e. for almost all $\omega \in \Omega$, as in Theorem~\ref{thm:infty}).

The results in this section can be used in the same way as in \eqref{eq:alpha} and \eqref{eq:beta} to prove convergence rates $\alpha$ and $\beta$ in Theorem \ref{main_thm} for point evaluations. Using the fact that $u(\omega,\cdot) \in C\, ^1(\overline D)$ (cf Theorem \ref{thm:schauder}) for almost all $\omega$, we for example have for evaluations of the norm of the Darcy flux $-\mathbf A \nabla u$ at a point $x^* \in D$ 
\[
\EE \left[ \big||\mathbf A \nabla u(x^*)| - |\mathbf A \nabla u_{h_\ell}(x^*)|\big|\right] \leq \EE \left[ |\mathbf A(x^*)| \,|(\nabla u - \nabla u_{h_\ell})(x^*)|\right] \leq \EE \left[ \mathbf A_\mathrm{max} \,|u - u_{h_\ell}|_{W^{1,\infty(D)}}\right] \lesssim C_{\mathbf A,f} \; h_\ell^\alpha,
\]
where $\alpha=s-d/2$, if we use Theorem~\ref{thm:infty_conv}, or $\alpha=t$, if we use the rates suggested by Theorem~\ref{thm:schauder}. Similarly, we have
\[
\VV\left[ \big||\mathbf A \nabla u(x^*)| - |\mathbf A \nabla u_{h_\ell}(x^*)|\big|\right] \leq  \EE \left[ \mathbf A^2_\mathrm{max} \,|u - u_{h_\ell}|^2_{W^{1,\infty(D)}}\right] \lesssim C_{\mathbf A,f} \; h_\ell^{2\alpha}, 
\]
and so $\beta=2\alpha$, where $\alpha$ is as above. This can easily be generalized to point evaluations of the Darcy flux in a given coordinate direction. The proof for point evaluations of the pressure is also similar, and leads to convergence rates $\alpha=1+s-d/2$ and $\beta=2 (1+s-d/2)$, if we use Theorem~\ref{thm:infty_conv}, and $\alpha=1+t$ and $\beta=2 (1+t)$, if we use the rates suggested by Theorem~\ref{thm:schauder}.

\section{Conclusions}
\label{sec:conc}
We have considered the application of multilevel Monte Carlo methods to elliptic PDEs with random coefficients, in the important case of coefficients which are not uniformly coercive and bounded with respect to the random parameter. This includes, for example, log--normal random fields. Under minimal assumptions on the random coefficient, we have proven convergence of the multilevel Monte Carlo algorithm, together with an upper bound on its computational cost. We have shown that the convergence analysis in \cite{tsgu12} holds also in the case of more general, tensor--valued coefficients, and also for point evaluations of the pressure and the flux.

\section*{Acknowledgments}
The author would like to thank Dr Julia Charrier, Prof Andrew Cliffe, Prof Mike Giles, Prof Robert Scheichl and Dr Elisabeth Ullmann, for their contributions to the papers on which this work is built.

\bibliographystyle{plain}
\bibliography{bibWSC}

\begin{thebibliography}{10}

\bibitem{bsz11}
A.~Barth, Ch. Schwab, and N.~Zollinger.
\newblock Multi--level {M}onte {C}arlo finite element method for elliptic
  {PDE}'s with stochastic coefficients.
\newblock {\em Numerische Mathematik}, 119(1):123--161, 2011.

\bibitem{brandt2}
A.~Brandt and V.~Ilyin.
\newblock Multilevel {M}onte {C}arlo methods for studying large scale phenomena
  in fluids.
\newblock {\em Journal of Molecular Liquids}, 105(2-3):245--248, 2003.

\bibitem{brenner_scott}
S.~C. Brenner and L.~R. Scott.
\newblock {\em The Mathematical Theory of Finite Element Methods}, volume~15 of
  {\em Texts in Applied Mathematics}.
\newblock Springer, third edition, 2008.

\bibitem{charrier}
J.~Charrier.
\newblock Strong and weak error estimates for the solutions of elliptic partial
  differential equations with random coefficients.
\newblock {\em SIAM Journal on Numerical Analysis}, 50(1):216--246, 2012.

\bibitem{cst11}
J.~Charrier, R.~Scheichl, and A.~L. Teckentrup.
\newblock Finite element error analysis of elliptic {PDE}s with random
  coefficients and its application to multilevel {M}onte {C}arlo methods.
\newblock Technical Report 02/11, University of Bath, 2011.
\newblock Available at {\tt http://www.bath.ac.uk/math-sci/bics/papers/}.

\bibitem{ciarlet}
P.~G. Ciarlet.
\newblock {\em The {F}inite {E}lement {M}ethod for {E}lliptic {P}roblems}.
\newblock North--Holland, 1978.

\bibitem{cgst11}
K.~A. Cliffe, M.~B. Giles, R.~Scheichl, and A.~L. Teckentrup.
\newblock Multilevel {M}onte {C}arlo methods and applications to elliptic
  {PDE}s with random coefficients.
\newblock {\em Computing and Visualization in Science}, 14(1):3--15, 2011.

\bibitem{demarsily}
G.~de~Marsily.
\newblock {\em Quantitative Hydrogeology}.
\newblock Academic Press, 1986.

\bibitem{demarsilyetal}
G.~de~Marsily, F.~Delay, J.~Goncalves, P.~Renard, V.~Teles, and S.~Violette.
\newblock Dealing with spatial heterogeneity.
\newblock {\em Hydrogeology Journal}, 13:161--183, 2005.

\bibitem{delhomme}
P.~Delhomme.
\newblock Spatial variability and uncertainty in groundwater flow param- eters,
  a geostatistical approach.
\newblock {\em Water Resources Research}, 15(2):269--280, 1979.

\bibitem{GT}
D.~Gilbarg and N.~S. Trudinger.
\newblock {\em Elliptic partial differential equations of second order}.
\newblock Classics in Mathematics. Springer, Berlin, 2001.

\bibitem{giles1}
M.~B. Giles.
\newblock Multilevel {M}onte {C}arlo path simulation.
\newblock {\em Operations Research}, 256:981--986, 2008.

\bibitem{gittelson}
C.~J. Gittelson.
\newblock Stochastic {G}alerkin discretization of the log-normal isotropic
  diffusion problem.
\newblock {\em Mathematical Models \& Methods in Applied Sciences},
  20(2):237--263, 2010.

\bibitem{gsu12}
I.~G. Graham, R.~Scheichl, and E.~Ullmann.
\newblock Finite element error analysis for mixed formulations of elliptic
  {PDE}s with lognormal coefficients.
\newblock In preparation, 2012.

\bibitem{grisvard2}
P.~Grisvard.
\newblock {\em Elliptic problems in non--smooth domains}.
\newblock Pitman, 1985.

\bibitem{hackbusch}
W.~Hackbusch.
\newblock {\em Elliptic differential equations}, volume~18 of {\em Springer
  Series in Computational Mathematics}.
\newblock Springer, 2010.

\bibitem{heinrich}
S.~Heinrich.
\newblock Multilevel {M}onte {C}arlo methods.
\newblock In {\em Large Scale Scientific Computing. 3rd international
  conference}, volume 2179 of {\em Lecture notes in Computer Science}, pages
  58--67. Berlin, Springer, 2001.

\bibitem{s80}
A.~Schatz.
\newblock A {W}eak {D}iscrete {M}aximum {P}rinciple and {S}tability of the
  {F}inite {E}lement {M}ethod in {$L_\infty$} on {P}lane {P}olygonal {D}omains.
  {I}.
\newblock {\em Mathematics of Computation}, 34(149):77--91, 1980.

\bibitem{teckentrup_thesis}
A.~L. Teckentrup.
\newblock {\em Multilevel {M}onte {C}arlo methods for elliptic {PDE}s with
  random coeffcients}.
\newblock PhD thesis, University of Bath, 2013.

\bibitem{tsgu12}
A.~L. Teckentrup, R.~Scheichl, M.~B. Giles, and E.~Ullmann.
\newblock Further analysis of multilevel {M}onte {C}arlo methods for elliptic
  {PDE}s with random coefficients.
\newblock Technical Report arXiv:1204.3476v1, {\tt arXiv.org}, 2012.
\newblock Available at {\tt http://arxiv.org/abs/1204.3476}.

\end{thebibliography}

\end{document}